\documentclass[a4paper,11pt]{amsart}

\usepackage[T1]{fontenc}
\usepackage{amsmath, amssymb, amsthm, mathtools}
\usepackage{xcolor}
\usepackage{booktabs}

\theoremstyle{definition}
\newtheorem{theorem}{Theorem}[section]

\newtheorem{proposition}[theorem]{Proposition}
\newtheorem{proposition-definition}[theorem]{Proposition--Definition}

\newtheorem{corollary}[theorem]{Corollary}
\newtheorem{definition}[theorem]{Definition}

\newtheorem{example}[theorem]{Example}
\newtheorem{question}[theorem]{Question}

\theoremstyle{definition}
\newtheorem{remark}[theorem]{Remark}

\newtheorem{claim}[theorem]{Claim}

\usepackage[pagebackref=true, colorlinks, citecolor=blue]{hyperref}

\usepackage[numbers,sort]{natbib}
\numberwithin{equation}{section}
\title{Spectrum of the refined Diophantine exponent}
\author{Quang-Khai Nguyen}
\address{Institut Camille Jordan, Universit\'e Claude Bernard Lyon 1 \newline \indent 21 avenue Claude Bernard, 69100 Villeurbanne, France}
\date{\today}
\keywords{Combinatorics on words, Diophantine exponent, Champernowne word, Rudin--Shapiro word, Thue--Morse word, coding rotations, bracket words.}
\subjclass[2020]{primary 37B10, 68R15; secondary 11B85, 11J70, 11K06, 60B05.}
\email{nguyen@math.univ-lyon1.fr}
\makeatletter
\@namedef{subjclassname@2020}{2020 Mathematics Subject Classification}
\makeatother
\begin{document}
\begin{abstract}
The refined Diophantine exponent, recently introduced by the author, is a quantity that measures the periodicity of an infinite word. In this article, we study this exponent from combinatorial and topological viewpoints. First, we show that, over a ternary alphabet, the spectrum of the refined Diophantine exponent is $[1,\infty]$. Second, we show that this exponent has topological properties similar to those of the set of Liouville numbers. Finally, we provide concrete examples with the Champernowne, Rudin--Shapiro, and Thue--Morse words, words coming from coding a rotation by intervals, and bracket words.

\end{abstract}
\maketitle
\tableofcontents
\section{Introduction}


For an infinite word $\mathbf{a}=a_0a_1\cdots$, the \emph{refined Diophantine exponent}, denoted by $\mathbf{Rdio}(\mathbf{a})$, measures how far the word is from being eventually periodic. It is a refinement of the \emph{Diophantine exponent} introduced by Adamczewski and Bugeaud \cite{Adamczewski-Bugeaud-2007-dynamics} that takes a certain notion of \emph{mismatch} (or \emph{noise}) into account, making the exponent more flexible. This notion is motivated by problems from transcendental number theory; we refer the reader to \cite{Khai-2026} and the references therein for more details. Notably, by using the Subspace Theorem of Schmidt \cite{Schmidt-1980,Schmidt-1993}, Theorem~A in \emph{loc. cit.} states that if $\mathbf{a}$ is written over a finite set of algebraic numbers, $\beta$ is an algebraic number such that $|\beta|>1$ and $\mathbf{Rdio}(\mathbf{a})>{M(\beta)}/{\log|\beta|}$, then the number 
\[\sum_{i\geq0}a_i\beta^{-i}\]
either lies in the number field $\overline{\mathbb Q}(\beta,a_i:i\geq0)$ or is transcendental. Here, $M(\beta)$ denotes the Mahler measure of $\beta$. However, all the examples of words studied in \emph{loc. cit.} have an infinite refined Diophantine exponent. It is therefore natural to ask about the spectrum of this exponent in order to understand to what extent the results of \cite{Khai-2026} can be applied. The spectrum of the Diophantine exponent has been extensively studied (see e.g. \cite{Dubickas-2009,Adamczewski-Bugeaud-2011}). We also note the study of another important exponent, the \emph{critical exponent}, whose spectrum has been investigated in \cite{Krieger-Shallit-2007,Currie-Rampersad-2008}. For the refined Diophantine exponent, however, the allowance of mismatches makes determining its spectrum highly nontrivial. Our first main result establishes that the spectrum of the refined Diophantine exponent is as large as possible.

\begin{theorem}\label{Theorem: spectrum of Rdio}
  Over a ternary alphabet, the spectrum of $\mathbf{Rdio}$ is $[1,\infty]$. 
\end{theorem}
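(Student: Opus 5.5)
The plan is to prove the two inclusions separately. First, $\mathbf{Rdio}(\mathbf{a})\ge 1$ for every infinite word $\mathbf{a}$: I expect this to follow directly from the definition recalled from \cite{Khai-2026}, because the trivial decomposition of a prefix (empty $U$, one copy of $V$) is always admissible. Second, for every $\rho\in[1,\infty]$ I will construct an explicit word $\mathbf{a}_\rho$ over $\{0,1,2\}$ with $\mathbf{Rdio}(\mathbf{a}_\rho)=\rho$. For $\rho=\infty$ any eventually periodic word works, or any of the examples of \cite{Khai-2026}. The substance is therefore the construction for $1\le\rho<\infty$.

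\textbf{Construction.} I will use the letter $2$ only as a sparse \emph{marker}, and fill everything else with binary material of very low self-similarity. Fix a binary word $\mathbf{c}$ that is highly non-repetitive even up to mismatches, such as a Champernowne-type or generic word. For such a word, any factor that agrees with a shift of itself up to the tolerated number of mismatches must be short (logarithmic in the position). I then define $\mathbf{a}_\rho$ inductively as
\[
\mathbf{a}_\rho=U_1V_1^{w_1}\,2\,U_2V_2^{w_2}\,2\cdots,
\]
where:
\begin{itemize}
\item the blocks $U_k,V_k$ are fresh, pairwise distant factors of $\mathbf{c}$;
\item $|V_k|$ grows super-exponentially;
\item the exponents $w_k$ are chosen so that $|U_1V_1^{w_1}\cdots U_kV_k^{w_k}|\,/\,|U_1\cdots U_kV_k|\to\rho$.
\end{itemize}
The lower bound $\mathbf{Rdio}(\mathbf{a}_\rho)\ge\rho$ is then witnessed by these prefixes, which are exact repetitions with zero mismatches. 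For $\rho=1$ I will take $w_k=1$, so that the word is essentially $\mathbf{c}$ with markers.

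\textbf{Upper bound.} The main obstacle is proving $\mathbf{Rdio}(\mathbf{a}_\rho)\le\rho$. The difficulty is that mismatches allow approximate periodic patterns, unlike the classical Diophantine exponent, so one must exclude \emph{every} prefix of the form $UV^{w}$-with-noise that beats the ratio $\rho$. I plan to argue by cases on the length of the candidate period $|V|$ relative to the scales $|V_k|$:
\begin{enumerate}
\item If $|V|$ is not close to any $|V_k|$, then the shifted comparison of the word with itself lines up fresh blocks of $\mathbf{c}$ against unrelated blocks. The non-repetitiveness of $\mathbf{c}$ then forces a linear number of mismatches, which exceeds the allowed amount.
\item If $|V|\approx|V_k|$, the approximate repetition can extend at most through $V_k^{w_k}$. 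It is then stopped by the marker $2$, because there the comparison would place $2$ against $0/1$ repeatedly, and by the fresh block $U_{k+1}$. This gives a ratio at most the designed one, up to $o(1)$.
\end{enumerate}
The role of the third letter is exactly to make these cut-offs rigid. The super-exponential growth makes the contribution of earlier blocks negligible.

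I expect the technical heart to be a quantitative lemma of the form: \emph{for the chosen $\mathbf{c}$, two factors of length $n$ at distinct positions disagree in at least $cn$ places once $n\gg\log(\text{position})$.} This holds for a generic (e.g.\ normal, pseudo-random) binary sequence by a counting or Borel--Cantelli argument, which also handles $\rho=1$. If the definition's mismatch allowance is larger than linear-fraction tolerant, I would instead replace $\mathbf{c}$ with a concatenation of codewords from a good error-correcting code, for which the minimum-distance property gives the lemma directly.
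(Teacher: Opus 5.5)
Your strategy is sound and uses the same mechanism as the paper: a pseudorandom binary filler, an exact repetition giving $\mathbf{Dio}\ge\rho$, and a linear-mismatch lemma showing that the noise allowance gains nothing. The construction itself is genuinely different.

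The paper takes $\mathbf{a}=0^{u_1}B_1\,0^{u_2}B_2\cdots$ over $\{-1,0,1\}$. Here $u_l=\lfloor C^l\rfloor$, and $B_l$ are consecutive length-$u_l$ blocks of a word $\mathbf{b}$ with $C(\mathbf{b};m,k,N)\le 4\sqrt{(m+k+N)\log(m+k+N)}$, obtained by Azuma--Hoeffding and a union bound. This gives $\mathbf{Rdio}(\mathbf{a})=(C+1)/2$. The repeated material is a constant block of the new letter. It has every period, so $s_n-r_n\to\infty$ is free. A $0$ facing a $\pm1$ is always a mismatch, so deciding which comparisons can agree is immediate. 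The price of geometric growth is a three-case analysis of where $r_n+1$ and $r_n+t_n-s_n$ land.

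In your version the exponent is read off directly as $\lim w_k$, and super-exponential growth makes earlier blocks negligible. The cost is that you must track the piecewise-translation map from positions of $\mathbf{a}$ to positions of $\mathbf{c}$. Agreement is forced only when the period is a multiple of some $|V_k|$ and both letters lie in the same run $V_k^{w_k}$. So the relevant dichotomy is divisibility, not $|V|\approx|V_k|$: $|V|=|V_k|+1$ is non-aligned, while $|V|=2|V_k|$ is aligned but gives a worse ratio. All other comparisons split into $O(1)$ pieces per block crossed, and the lemma applies to each piece.

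Your counting route to the lemma is more elementary than the martingale. For a shift $k\ge1$, the first $k$ letters and the mismatch set determine the word of length $n+k$. Hence the bad event has probability at most $2^{-n}\sum_{j<cn}\binom{n}{j}$, overlapping factors included, and the threshold $n\ge A\log X$ is all you need. In fact the uncovered part of a window is at most $\delta+1$ runs of exact agreement. So it would suffice to know that $\mathbf{c}[i,i+n-1]=\mathbf{c}[j,j+n-1]$ with $i\neq j\le X$ forces $n=O(\log X)$.

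Several side claims need correcting.
\begin{itemize}
\item A Champernowne-type or merely normal word does not satisfy your lemma. In the binary Champernowne word, a shift by the current digit length $K$ aligns $x$ with $x+1$, which differ in about two bits on average. That gives only about $2n/K$ disagreements, so take a random word instead.
\item The error-correcting-code fallback also fails: minimum distance controls aligned codewords, not shifted factors.
\item Take $U_k,V_k$ to be consecutive rather than ``distant'' blocks of $\mathbf{c}$. Then the logarithm of a $\mathbf{c}$-position is controlled by that of the corresponding $\mathbf{a}$-position.
\item The marker does not make the cut-off rigid. A single $2$ produces at most two mismatches, which two unit intervals absorb; the cut-off actually comes from the fresh block $U_{k+1}$ via the lemma.
\end{itemize}
Consequently your argument never uses the third letter. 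As far as I can see it goes through verbatim over $\{0,1\}$, a case the paper leaves open in its final section, so this point deserves a careful check.
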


To prove Theorem~\ref{Theorem: spectrum of Rdio}, we first find an infinite binary word with very few mismatches (in particular, its $\mathbf{Rdio}$ equals $1$), or, more intuitively, a word that appears \emph{pseudorandom}. Following an idea from \cite[Section~2]{Khai-2026}, we modify this word using lacunary sequences and show that it satisfies the required property. In combinatorics on words, a natural way to measure \emph{pseudorandomness} is through the notion of correlations; see the seminal works \cite{Mauduit-Sarkozy-1997-I,Mauduit-Sarkozy-1998-II,Mauduit-1999-III,Mauduit-1999-IV,Cassaigne-Mauduit-Sarkozy-2002-VII}. Motivated by these studies, we use the probabilistic method to show the existence of such an infinite binary word that satisfies a certain \emph{effective pseudorandomness} property, see Theorem~\ref{theorem: existence of word}. 

Theorem~\ref{Theorem: spectrum of Rdio} explains the strength of the refined Diophantine exponent, since it would mean that \cite[Theorem~A]{Khai-2026} can be applied to words for which results from \cite{Luca-Ouaknine-Worrell-2025,Luca-Ouaknine-Worrell-2023,Kebis-Luca-Ouaknine-Scoones-Worrell-2024,Kebis-Luca-Ouaknine-Scoones-Worrell-2025} cannot be used. 

The second aim of this article is to study topological properties of $\mathbf{Rdio}$, motivated by the well-known fact that the set of Liouville numbers has 0 Hausdorff dimension and is $G_\delta$-dense, see \cite{Oxtoby-1980}.

Firstly, for a finite alphabet $\Sigma$, we equip the set $\Sigma^\mathbb Z$ of infinite words over $\Sigma$ with the \emph{uniform Bernoulli product measure}; that is, we view each letter as an independent random variable uniformly distributed over $\Sigma$. We then obtain the following result.

\begin{theorem}\label{Theorem: probability of Rdio=1}Let $\Sigma$ be a finite alphabet, and equip the set $\Sigma^\mathbb Z$ with the uniform Bernoulli product measure. Then $\mathbb{P}(\mathbf{Rdio}(\mathbf{a})=1)=1$.
\end{theorem}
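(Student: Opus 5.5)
The plan is a first-moment argument combined with the Borel--Cantelli lemma. Two facts should drive it: every word satisfies $\mathbf{Rdio}(\mathbf{a})\geq 1$, and a long approximate repetition in a uniformly random word is exponentially unlikely in its length. Only the coordinates $a_0a_1\cdots$ enter the definition of $\mathbf{Rdio}$, and their law under the product measure is again uniform Bernoulli, so we may work with one-sided words.

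\textbf{Lower bound.} The inequality $\mathbf{Rdio}(\mathbf{a})\geq 1$ for every $\mathbf{a}$ should follow directly from the definition, using trivial factorisations with a single period. Hence the statement reduces to proving that, for every fixed $\varepsilon>0$, we have $\mathbf{Rdio}(\mathbf{a})\leq 1+\varepsilon$ almost surely. We then intersect these events over $\varepsilon=1/k$, $k\geq1$.

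\textbf{Upper bound.} Recall that a value of the exponent larger than $1+\varepsilon$ is witnessed by infinitely many pairs $(r,s)$, where $r=|U|$ is the length of the preperiod and $s=|V|$ the period. For each such pair, the word $a_r a_{r+1}\cdots$ must satisfy $a_i=a_{i+s}$ for all but a permitted number of mismatches $i$ in a window $[r,r+m)$ of length $m\geq \varepsilon(r+s)$.

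Let $E(r,s)$ be this event, with the mismatch budget $k$ at most $\delta m$ for some small $\delta<1$. How small $\delta$ is depends on how mismatches are counted in the definition. If the budget is $o(r+s)$, any fixed $\delta$ works for large $r+s$.

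To bound $\mathbb P(E(r,s))$, expose the comparisons $a_i \stackrel{?}{=} a_{i+s}$ in increasing order of $i$. At step $i$, the letter $a_{i+s}$ has an index larger than every index revealed so far. Conditionally on the past, it is therefore uniform, so the equality holds with probability exactly $1/|\Sigma|$. Thus the number of matches is a sum of $m$ independent Bernoulli$(1/|\Sigma|)$ variables, and
\[
\mathbb P(E(r,s))\leq \binom{m}{\lfloor\delta m\rfloor}|\Sigma|^{-(1-\delta)m}\leq e^{-cm}\leq e^{-c\varepsilon(r+s)},
\]
with $c>0$ once $\delta$ is small enough (entropy bound) and $|\Sigma|\geq2$. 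The case $|\Sigma|=1$ is trivial and should be excluded or handled separately. If the definition allows several windows or several choices of $m$ for a given $(r,s)$, a union bound over the at most polynomially many choices still leaves exponential decay. Then
\[
\sum_{r,s}\mathbb P(E(r,s))\leq \sum_{N\geq1}(N+1)e^{-c\varepsilon N}<\infty,
\]
so by Borel--Cantelli almost surely only finitely many $E(r,s)$ occur, which gives $\mathbf{Rdio}(\mathbf{a})\leq1+\varepsilon$.

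\textbf{Main obstacle.} The delicate step is matching the event $E(r,s)$ exactly to the paper's definition of $\mathbf{Rdio}$ and its mismatch allowance. One must check that exceeding $1+\varepsilon$ forces an almost-periodic window whose length is linear in $r+s$ while the admissible number of mismatches is a small proportion of that length. One must also check that the number of events to be union-bounded grows only polynomially in $r+s$. If the definition allows mismatches in a proportion not uniformly below $1-1/|\Sigma|$, I would first show that such noise cannot increase the exponent, and then apply the same large-deviation bound with the corresponding threshold.
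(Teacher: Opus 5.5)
Your argument is correct, but it takes a different route from the paper. The paper never deals with approximate repetitions directly. It invokes the external equivalence $\mathbf{Rdio}(\mathbf{a})=1 \iff \mathbf{Dio}(\mathbf{a})=1$ (Proposition~2.1 of \cite{Khai-2026}). After that it only needs the bound $q^{-(t-s)}$ for an \emph{exact} repetition, summed over $r\in[-1,s-1]$ and $t\geq(1+\eta)s$, before applying Borel--Cantelli.

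You instead attack $\mathbf{Rdio}$ head-on. Your sequential-revealing observation is the key step: the letter $a_{i+s}$ is always fresh, so the match indicators are i.i.d.\ Bernoulli$(1/q)$ even when the two blocks overlap. This is the same filtration idea the paper uses for Theorem~\ref{theorem: existence of word}. Combined with the entropy bound $\binom{m}{\lfloor \epsilon m\rfloor}q^{-(1-\epsilon)m}\leq e^{-cm}$, it makes the proof self-contained, at the price of a slightly heavier estimate than the paper's one-line exact-match bound. Incidentally, the same revealing argument is what justifies the bound $q^{-(t-s)}$ that the paper simply asserts.

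The ``main obstacle'' you flag disappears with the paper's definition. Condition $(*)_\rho$ must hold for \emph{every} $\epsilon>0$, so to refute it you may fix a single $\epsilon_0$ (your $\delta$) with $H(\epsilon_0)<(1-\epsilon_0)\log q$, where $H$ is the natural-log binary entropy. Then $(\epsilon_0,\delta)$-closeness implies Hamming distance at most $\epsilon_0(t-s)$, whatever the number $\delta$ of intervals. Moreover, $s_n-r_n\to\infty$ forces infinitely many distinct triples $(r_n,s_n,t_n)$, which your Borel--Cantelli step rules out almost surely.

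Two small corrections are needed.
\begin{itemize}
\item For a fixed pair $(r,s)$, the window length $m=t-s$ ranges over infinitely many values, not polynomially many. Your summation still goes through, since $\sum_{m\geq \varepsilon(r+s)} e^{-cm}$ is a geometric tail of size $O(e^{-c\varepsilon(r+s)})$.
\item The case $|\Sigma|=1$ is not trivial but false: the constant word has $\mathbf{Rdio}=\infty$. It must therefore be excluded, as the paper tacitly does by taking $q\geq 2$.
\end{itemize}
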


The proof of Theorem~\ref{Theorem: probability of Rdio=1} uses standard probabilistic arguments based on the Borel--Cantelli lemma. This result implies that for almost all infinite words, we cannot use the strategy based on the Subspace Theorem to obtain the rational-transcendence dichotomy.

Secondly, for an alphabet $\Sigma$ (not necessarily finite), we equip the set $\Sigma^\mathbb Z$ with the \emph{Cantor topology}, that is, the topology whose basic open sets are the \emph{cylinder sets}, defined as follows: for any finite word $w$ over $\Sigma$, the cylinder set $[w]$ is the set of all infinite words that have $w$ as their prefix.

\begin{theorem}\label{Theorem: density of Rdio=infty}Let $\Sigma$ be an alphabet such that $|\Sigma| \geq 3$, and let the set $\Sigma^\mathbb Z$ be equipped with the Cantor topology. Then, for any given refined Diophantine exponent, the set of infinite words over $\Sigma$ having that exponent is dense.
\end{theorem}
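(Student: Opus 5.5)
The plan is to reduce density to one key fact: $\mathbf{Rdio}$ does not change under altering a finite prefix. Given that, we combine it with Theorem~\ref{Theorem: spectrum of Rdio}. Fix a value $\rho\in[1,\infty]$ and a cylinder $[w]$, where $w$ is a finite word over $\Sigma$. We must find an infinite word in $[w]$ whose refined Diophantine exponent equals $\rho$.

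Since $|\Sigma|\ge 3$, we choose three distinct letters of $\Sigma$ and identify them with the ternary alphabet of Theorem~\ref{Theorem: spectrum of Rdio}. That theorem gives an infinite word $\mathbf{b}$ over these three letters with $\mathbf{Rdio}(\mathbf{b})=\rho$. We then take the candidate
\[
\mathbf{a}=w\,\mathbf{b}'\in[w],
\]
where $\mathbf{b}'$ is either $\mathbf{b}$ itself or $\mathbf{b}$ with its first $|w|$ letters deleted. This choice depends on which invariance is easiest to establish: under prepending a finite word, or under replacing a prefix of the same length. Using the deleted version keeps the indices of the tail aligned, which is usually more convenient.

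The main step, and the expected obstacle, is the invariance lemma: if two infinite words agree from some index $N$ onward, they have the same $\mathbf{Rdio}$. Equivalently, $\mathbf{Rdio}$ is invariant under shift and under changing finitely many letters. The argument goes back to the definition in \cite{Khai-2026}. This exponent is a limsup of ratios measuring how long a prefix $UV^{r}$-type repetition, with a controlled number of mismatches, can be relative to the lengths $|U|+|V|$.

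\begin{itemize}
\item \emph{Changing finitely many letters in a fixed window $[0,N)$.} This adds at most $N$ mismatches, a bounded quantity. Provided the mismatch allowance grows along the sequence, or is normalized so that $O(1)$ extra mismatches are negligible, the limsup is unchanged.
\item \emph{Prepending or deleting a prefix of length $k$.} Each admissible pair $(U,V)$ for one word transforms into a pair $(U',V)$ for the other with $|U'|=|U|\pm k$, and the total length changes by the same $\pm k$. Since admissible pairs realizing the limsup must have lengths tending to infinity, the ratios are asymptotically the same and the exponent is preserved.
\end{itemize}

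One subtlety must be handled in the deletion case: if $|U|<k$, the repetition has to be truncated. This costs only a bounded part of the repetition and does not affect the limit. The infinite case $\rho=\infty$ and the case $\rho=1$, where the lower bound $1$ comes from trivial repetitions, follow from the same comparison. If the definition in \cite{Khai-2026} does not allow the mismatch count to absorb a constant, we would instead use only the shift form of the argument, writing $\mathbf{a}=w\mathbf{b}$. This avoids creating any new mismatches inside the repetition.

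Once the invariance lemma is established, $\mathbf{Rdio}(\mathbf{a})=\mathbf{Rdio}(\mathbf{b})=\rho$ and $\mathbf{a}\in[w]$. Since $w$ was arbitrary, the set of words with exponent $\rho$ meets every basic open set, so it is dense in the Cantor topology.
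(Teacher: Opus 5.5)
Your proposal is correct and matches the paper's argument: the paper sets $\mathbf{a}=w\mathbf{a}'$, where $\mathbf{Rdio}(\mathbf{a}')=c$ comes from Theorem~\ref{Theorem: spectrum of Rdio}, and applies the shift invariance of Proposition~\ref{proposition: Rdio is shift invariant}, whose proof splits into the cases $m\le r_n$ and $m>r_n$ just as your truncation remark does. Your prefix-replacement variant also works under the paper's definition, because the altered letters add at most one initial mismatch interval of length at most $|w|$, which is negligible relative to $t_n-s_n$, so it suffices to replace $\delta$ by $\delta+1$ and enlarge $\epsilon$ slightly.
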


The condition $|\Sigma| \geq 3$ is required because our proof relies on Theorem~\ref{Theorem: spectrum of Rdio}, which is currently only known to hold for alphabets with at least three letters. Note that Theorems~\ref{Theorem: probability of Rdio=1} and~\ref{Theorem: density of Rdio=infty} also hold for the Diophantine exponent.

The preceding results are qualitative rather than quantitative. In particular, the known examples of infinite words with $\mathbf{Rdio} \in [1, \infty)$ are not explicitly constructed. For the remainder of this article, we study the (refined) Diophantine exponents of specific families of infinite words. The state of knowledge regarding these exponents is summarized in Table~\ref{table:examples}.

\begin{table}[htpb]
\hspace*{-2cm}
\renewcommand{\arraystretch}{1.3} 
\begin{tabular}{|l|c|c|}
\hline
\textbf{Word} & \textbf{$\mathbf{Dio}$} & \textbf{$\mathbf{Rdio}$} \\
\hline
Sublinear complexity  & \begin{tabular}{@{}c@{}} $>1$ if non-eventually periodic \\ $\infty$ if eventually  periodic\end{tabular}  &  \\
\hline
Automatic  & \begin{tabular}{@{}c@{}}finite $>1$ if non-eventually periodic \\ $\infty$ if eventually  periodic\end{tabular}  & finite/$\infty$ in some cases  \\
\hline

Lacunary  & $\limsup{u_{n+1}}/{u_n}$ & $\infty$ \\
\hline
Sturmian  & \begin{tabular}{@{}c@{}}finite $>2$ if $\theta$ badly approximable \\ $\infty$ if $\theta$ well approximable\end{tabular} & $\infty$ \\
\hline
$k$-bonacci  & finite  & $\infty$ \\
\hline
Champernowne  & $1$ & $1$ \\
\hline
Rudin--Shapiro &  $\leq 4$ & $\leq25$ \\
\hline
Thue--Morse  & $\geq{3}/{2}$ & $\leq 25$ \\
\hline
\begin{tabular}{@{}c@{}}Coding rotations by intervals  \\ \hspace{-1.2cm}= Bracket of degree $1$\end{tabular} & \begin{tabular}{@{}c@{}}finite if $\theta$ badly approximable \\ $\infty$ if $\theta$ well approximable\end{tabular} & $\infty$ \\
\hline

Bracket of degree $\ell\geq2$ & finite/$\infty$ in some cases & finite/$\infty$ in some cases \\
\hline
Non-eventually periodic overlap-free& $\leq 2$ &  finite? \\
\hline
\end{tabular}
\vspace{0.2cm}
\caption{The (refined) Diophantine exponents of some families of infinite words}
\label{table:examples}
\end{table}

In Table~\ref{table:examples}, the results in the first four rows follow from \cite{Adamczewski-Bugeaud-2011,Adamczewski-2010,Peltomaki-2024,Khai-2026}. We will explain the motivation for the remaining entries in the rest of this introduction. The proofs for these results require diverse techniques from combinatorics on words.

\subsection{The Champernowne, Rudin--Shapiro, and Thue--Morse words}

We begin with the \emph{Champernowne}, \emph{Rudin--Shapiro}, and \emph{Thue--Morse} words, motivated by the work of Mauduit and Sarkozy \cite{Mauduit-Sarkozy-1998-II}. The latter two are standard examples of automatic words and frequently serve as test cases in various contexts within combinatorics on words.

For the Champernowne word, the proof is quite straightforward. The most interesting case is the Thue--Morse word. The proof here is inspired by the careful analysis of the correlations of the Thue--Morse word in \cite{Mauduit-Sarkozy-1998-II}. Indeed, we adapt their computations to our setting to obtain a bound on the number of mismatches, yielding our desired bound for $\mathbf{Rdio}$. The result for the Rudin--Shapiro word is proved similarly. We remark that since the Thue--Morse word is overlap-free, one expects in general that the refined Diophantine exponent of any non-eventually periodic overlap-free word is finite. 

\subsection{Coding of an irrational rotation by intervals}
Our next result involves the refined Diophantine exponent of words arising from the \emph{coding of an irrational rotation by intervals}. Typical examples include Sturmian words and words arising from the coding of an irrational rotation by rational intervals, which are known to have an infinite refined Diophantine exponent (see \cite[Section~2]{Khai-2026}). We extend this to the general case by proving that the refined Diophantine exponent of a word arising from the coding of an irrational rotation by \emph{any} intervals is always infinite, unless in the trivial case. The proof of this result uses several arguments regarding continued fractions, which is a standard approach to studying the dynamical properties of this family of words (see e.g. \cite{Berthe-Holton-Zamboni-2006}). 

Our motivation for studying this family of words stems from \emph{degree sequences} in algebraic dynamics, drawing inspiration from \cite{BDJ-2020,Khai-2025,Khai-2026}. Specifically, by examining \emph{monomial surface self-maps} of a \emph{projective toric surface} (associated with integral matrices whose eigenvalues are complex conjugates with arguments \emph{incommensurable with $2\pi$}) and their associated degree sequences with respect to an ample divisor, one observes that for almost all primes $p$, the reduction of such a sequence modulo $p$ can be determined in two steps. First, we evaluate the integral values of a \emph{piecewise-linear function} on $\mathbb{C}$ at $\chi^n$, $n\geq0$, for some Gaussian integer $\chi$ whose argument is incommensurable with $2\pi$; this step provides an irrational rotation. Next, we reduce these values modulo $p$, which results in (not necessarily rational) intervals (see \cite[Section~2]{Khai-2025}), which is similar to the coding of the rotation ${\mathrm{arg}\chi}/{2\pi}\in[0,1)\setminus\mathbb{Q}$. It is worth noting that the proof of Theorem~D in \emph{loc. cit.} shows that under these conditions on the self-maps, this sequence, upon reduction modulo $p$, is not \emph{$p$-automatic}.
\subsection{Bracket words}
Our final object of study is a more general family of words than the coding of an irrational rotation by intervals, namely the \emph{bracket words}. These words are defined by composing a \emph{finitely-valued generalized polynomial} with a \emph{piecewise-constant function} (see e.g. \cite{Adamczewski-Konieczny-2023} and references therein). Motivated by the \emph{pseudorandomness} properties of sequences determined by the fractional part of $n\alpha$, $n^2\alpha$, or more generally $n^k\alpha$ \cite{Mauduit-2000-V, Mauduit-2000-VI}, we expect that bracket words exhibit \emph{pseudorandom} behavior unless they are eventually periodic. In light of this observation, our next result shows that for certain families of bracket words, the refined Diophantine exponents are finite/infinite. The proofs rely on several equidistribution results.

We remark that the reduction modulo $p$ of the degree sequence mentioned previously is not a bracket word. This is because this degree sequence arises from the values of a piecewise-linear function on $\mathbb{C}$, and thus involves the \emph{exponential function}. On the other hand, much like the degree sequence modulo $p$ \cite[Theorem~D]{Khai-2025}, bracket words are not automatic unless they are eventually periodic \cite[Theorem~B]{Konieczny-2022}. We also note that the lacunary sequence defined by $d^n$ for any $d\geq2$ has an infinite refined Diophantine exponent. On the other hand, we have seen that the refined Diophantine exponents of the Thue--Morse and Rudin--Shapiro words (both of which are $2$-automatic) are finite. Meanwhile, the Diophantine exponent of a non-eventually periodic automatic word is always finite, see \cite[Lemma~6.1]{Adamczewski-Cassaigne-2006-Compositio}. Therefore, it is natural to ask when the refined Diophantine exponent of a non-eventually periodic automatic word is finite.

\subsection*{Organization}
In Section~\ref{section: Preliminaries}, we recall the definition of the (refined) Diophantine exponent and some of its properties, together with some results on continued fractions. In Section~\ref{section: spectrum of Rdio}, we study the spectrum of the Diophantine exponent. We then investigate its topological properties in Section~\ref{section: topo properties}. Sections~\ref{section: Champernowne} and~\ref{section: Thue--Morse} are devoted to the study of the refined Diophantine exponent of the Champernowne, Rudin--Shapiro, and Thue--Morse words. In Sections~\ref{section: coding rotations} and~\ref{section: bracket}, we study words arising from the coding of rotations by intervals and, more generally, bracket words. Finally, we pose several open questions in Section~\ref{section: conjectures}.

\section{Preliminaries}\label{section: Preliminaries}
\subsection{The refined Diophantine exponent}\label{section: Rdio}

In this section, we present the definition of the refined Diophantine exponent and give several of its basic properties. To motivate this definition, we start with the Diophantine exponent, first introduced in \cite{Adamczewski-Bugeaud-2007-dynamics}.

\begin{definition}\label{Definition: Dio}
The Diophantine exponent of a word $\mathbf{a}$, denoted by $\mathbf{Dio}(\mathbf{a})$, is defined as the supremum of the real numbers $\rho$ for which there exist three sequences of integers $(r_n)_{n\geq 0}$, $(s_n)_{n\geq 0}$, and $(t_n)_{n\geq 0}$ satisfying, for all $n\geq 0$:
\begin{itemize}
\item[(i)] $-1\leq r_n<s_n<t_n$;
\item[(ii)] the words $\mathbf{a}[r_n+1, r_n+t_n-s_n]$ and $\mathbf{a}[s_n+1, t_n]$ coincide;
\item[(iii)]  $t_n \geq \rho s_n$;
\item[(iv)]   $s_n - r_n \to \infty$ as $n \to \infty$.
\end{itemize}Here, $\mathbf{a}[r,s]$ denotes the finite word $a_r a_{r+1} \cdots a_s$.
\end{definition}

\begin{remark}
Originally, this definition can be reformulated in the following more intuitive form: $\mathbf{Dio}(\mathbf{a})$ is defined as the supremum of the real numbers $\rho$ for which there exist two sequences of finite words $U_n$ and $V_n$, and a real number  $w>1$ such that $U_nV_n^w$ is a prefix of $\mathbf{a}$, ${|U_nV_n^w|}/{|U_nV_n|}\geq \rho$, and $|V_n|$ tends to infinity. Here, for a finite word $V$ and a real number $w$, we write $V^{w}$ for the word $V^{\lfloor w\rfloor}V'$ with $V'$ being the prefix of $V$ of length $\lceil\{w\}|V|\rceil$, where $\lfloor\cdot\rfloor$ (resp. $\lceil\cdot\rceil$ and $\{\cdot\}$) denotes the floor function (resp. the ceiling function and the fractional part). For a finite set (resp. a finite word), we use $|\cdot|$ to denote its cardinality (resp. its length).    
\end{remark}

The idea behind the refined Diophantine exponent is to replace condition (ii) with a weaker condition that allows for an \emph{arbitrarily small} amount of mismatch between the two finite subwords. To state this explicitly, we need the following notion of \emph{$(\epsilon,\delta)$-closeness}.

\begin{definition} \label{Definition: epsilon delta close}
For two finite words $U=u_1\cdots u_L$ and $V=v_1\cdots v_L$ of the same length $L>0$, a real number $\epsilon>0$, and an integer $\delta\geq0$, we say that $U$ and $V$ are \emph{$(\epsilon,\delta)$-close} if there exist $\delta$ subintervals $I_1,\ldots,I_\delta$ of $\{1,\ldots,L\}$ such that:
\begin{itemize}\item[(i)]  we have $\left\{1\leq i\leq L:u_i\neq v_i\right\}\subseteq \bigcup_{j=1}^\delta I_{j};$
\item[(ii)] $\left|\bigcup_{j=1}^\delta I_{j}\right|\leq \epsilon L$.
\end{itemize}
 Note that the intervals $I_j$ can be empty.
\end{definition}

\begin{remark}
It follows that the \emph{Hamming distance} between $U$ and $V$, defined as $|\{1\leq i\leq L:u_i\neq v_i\}|$, is at most $\epsilon L$.
\end{remark}

\begin{definition}\label{Definition: refined Dio}
Let $\mathbf{a}=a_0a_1\cdots$ be an infinite word, and let $\rho\geq1$ be a real number. We say that $\mathbf{a}$ satisfies Condition $(*)_\rho$ if, for every $\epsilon>0$, there exist three sequences of integers $(r_n)_{n\geq 0}$, $(s_n)_{n\geq 0}$, $(t_n)_{n\geq 0}$ and  an integer $\delta\geq0$  satisfying conditions (i), (iii), and (iv) of Definition~\ref{Definition: Dio}, along with
\begin{itemize}
\item[(ii')]  such that for all $n$ large enough, $\mathbf{a}[r_n+1,r_n+t_n-s_n]$ is $(\epsilon,\delta)$-close to $\mathbf{a}[s_n+1, t_n]$.
\end{itemize}
The refined Diophantine exponent $\mathbf{Rdio}(\mathbf{a})$ is defined to be the supremum of $\rho$ for which $\mathbf{a}$ satisfies Condition $(*)_\rho$.
\end{definition}

It is clear that $1\leq \mathbf{Dio}(\mathbf{a})\leq \mathbf{Rdio}(\mathbf{a})\leq\infty$. For further properties, we refer the reader to \cite{Khai-2026}. Notably, we have that $\mathbf{Rdio}(\mathbf{a})=1$ if and only if $\mathbf{Dio}(\mathbf{a})=1$. Furthermore, $\mathbf{Rdio}(\mathbf{a})=\infty$ when $\mathbf{a}\in$\{lacunary sequences, Sturmian words, $k$-bonacci word for $k\geq2$\}, whereas $\mathbf{Rdio}(\mathbf{a})=1$ when $\mathbf{a}$ arises from the $\beta$-expansion of an algebraic number in $[0,1)\setminus\mathbb{Q}(\beta)$ for any Pisot number $\beta$.

\subsection{Continued fractions}

The main reference for the theory is \cite[Chapter~1]{Bugeaud-ApproximationbyAlgebraicNumbers}. We collect here some necessary properties of continued fractions. 

Let $\theta$ be an irrational real number. We can express $\theta$ as an (infinite) continued fraction $[w_0; w_1, w_2, \ldots]$. Each $w_n$ is called a \emph{partial quotient} of $\theta$. The \emph{continued fraction convergents} of $\theta$ are the sequence \[\frac{p_n}{q_n} = [w_0; w_1, \ldots, w_n]\] where $q_n$ are strictly increasing for $n \geq 1$ and $p_n$ is coprime to $q_n$. We then have the fundamental inequality $|q_n\theta - p_n| < {1}/{q_{n+1}} < {1}/{q_n}$ for all $n \geq 0$. We write $\lVert\cdot\rVert$ for the distance of a real number to its nearest integer, so $\lVert q_n\theta\rVert=|q_n\theta-p_n|<{1}/{q_n}$.

The following is known as \emph{the law of best approximation}

\begin{proposition}\label{proposition: law of best approx}
 If $|q\theta - p| < |q_n\theta - p_n|$ for some $n \geq 1$ and some integers $p, q$ with $q \geq 1$, then $q \geq q_{n+1}$. In other words, amongst the fractions ${p}/{q}$ with $1 \leq q < q_{n+1}$, the one minimizing $|q\theta - p|$ is ${p_n}/{q_n}$.
\end{proposition}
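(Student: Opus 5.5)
We argue by contradiction, using the continued fraction expansion of $\theta$ and the two basic facts recalled above: the recursion $q_{n+1}=w_{n+1}q_n+q_{n-1}$ with $q_{-1}=0$, $q_0=1$, and the classical identity
\[
q_{n+1}\lVert q_n\theta\rVert + q_n\lVert q_{n+1}\theta\rVert = 1 \qquad (n\ge 0).
\]
Suppose $1\le q<q_{n+1}$, $p\in\mathbb Z$, and $|q\theta-p|<|q_n\theta-p_n|$. The goal is to show that such a pair $(p,q)$ cannot exist.

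The main step is to write $(p,q)$ in the basis formed by two consecutive convergents. Since $p_{n+1}q_n-p_nq_{n+1}=\pm1$, the matrix with rows $(p_n,q_n)$ and $(p_{n+1},q_{n+1})$ is unimodular. Hence there are integers $x,y$ with
\[
p=xp_n+yp_{n+1}, \qquad q=xq_n+yq_{n+1}.
\]
We now rule out each possibility for $x$ and $y$.

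If $y=0$, then $q=xq_n$ with $x\ge1$, so $|q\theta-p|=x|q_n\theta-p_n|\ge|q_n\theta-p_n|$. This contradicts the assumption.

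If $x=0$, then $q=yq_{n+1}$ with $y\geq1$, which contradicts $q<q_{n+1}$.

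If $x$ and $y$ are both nonzero, they must have opposite signs. Indeed, if both were positive then $q\ge q_n+q_{n+1}>q_{n+1}$. If both were negative then $q<0$. Next we use the standard fact that $q_n\theta-p_n$ and $q_{n+1}\theta-p_{n+1}$ have opposite signs. Since $x$ and $y$ also have opposite signs, the two terms $x(q_n\theta-p_n)$ and $y(q_{n+1}\theta-p_{n+1})$ have the same sign. Therefore
\[
|q\theta-p|=|x|\,|q_n\theta-p_n|+|y|\,|q_{n+1}\theta-p_{n+1}|\ge|q_n\theta-p_n|,
\]
again a contradiction.

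The one step that needs genuine input is the alternation of signs of $q_n\theta-p_n$. It follows from the fact that the convergents $p_n/q_n$ lie alternately below and above $\theta$. This in turn follows from $\theta=[w_0;w_1,\dots,w_n,\theta_{n+1}]$ with $\theta_{n+1}>1$, together with the formula
\[
\theta-\frac{p_n}{q_n}=\frac{(-1)^n}{q_n(\theta_{n+1}q_n+q_{n-1})},
\]
which is proved by induction from the recursion for $p_n$ and $q_n$.

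A minor technicality is the edge case $n\ge1$ with $w_1=1$, where $q_1=q_0$. This is harmless, because the argument never uses strict monotonicity of $q_n$ at $n=0$.

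Finally, the ``in other words'' formulation follows at once. Among fractions $p/q$ with $1\le q<q_{n+1}$, none achieves $|q\theta-p|<|q_n\theta-p_n|$, so $p_n/q_n$ is a minimizer of $|q\theta-p|$.
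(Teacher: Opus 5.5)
Your proof is correct. The paper gives no proof of this proposition; it quotes it as the classical law of best approximation from Bugeaud's book. Your argument is the standard textbook proof: write $(p,q)$ in the unimodular basis of two consecutive convergents, then use the alternation of signs of $q_n\theta-p_n$.

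Two small remarks:
\begin{itemize}
\item The identity $q_{n+1}\lVert q_n\theta\rVert+q_n\lVert q_{n+1}\theta\rVert=1$ that you quote at the outset is never used, so you can drop it. As written with $\lVert\cdot\rVert$ rather than $|q_n\theta-p_n|$, it also fails at $n=0$ when $\{\theta\}>1/2$.
\item For the ``the one minimizing'' clause, your case analysis already gives uniqueness: equality $|q\theta-p|=|q_n\theta-p_n|$ forces $y=0$ and $x=1$. What guarantees that $p_n/q_n$ is itself admissible is $q_{n+1}=w_{n+1}q_n+q_{n-1}>q_n$ for $n\ge1$.
\end{itemize}
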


\begin{definition}
 $\theta$ is said to be \emph{badly approximable} if $\theta$ has bounded partial quotients; otherwise, $\theta$ is called \emph{well approximable}. 
\end{definition}

\begin{proposition}
    Let $\theta$ be an irrational number with continued fraction convergents ${p_n}/{q_n}$. The following assertions are equivalent:
    \begin{itemize}
        \item[(i)] There exists $\kappa > 0$ such that $|q_n\theta - p_n| \geq {\kappa}/{q_n}$ for all $n$.
         \item[(ii)] There exists $\kappa > 0$ such that $|q\theta - p| \geq {\kappa}/{q}$ for all integers $p, q$ with $q > 0$.
         \item[(iii)] There exists $A > 0$ such that $q_{n+1} \leq A q_n$ for all $n$.
         \item[(iv)] $\theta$ is badly approximable.
    \end{itemize}
\end{proposition}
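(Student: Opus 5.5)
We prove the equivalence of (i)--(iv) through the chain (ii)$\Rightarrow$(i)$\Rightarrow$(iii)$\Leftrightarrow$(iv) and (iii)$\Rightarrow$(ii). Throughout, the only tools are the recurrences $q_{n+1}=w_{n+1}q_n+q_{n-1}$ and the standard two-sided estimate
\[
\frac{1}{q_n+q_{n+1}}<\lVert q_n\theta\rVert=|q_n\theta-p_n|<\frac{1}{q_{n+1}}.
\]
The upper bound was recalled above. The lower bound comes from writing $|q_n\theta-p_n|=1/(q_n\theta_{n+1}'+q_{n-1})$ with $\theta'_{n+1}=[w_{n+1};w_{n+2},\ldots]<w_{n+1}+1$, which gives a denominator smaller than $q_{n+1}+q_n$.

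The implication (ii)$\Rightarrow$(i) is immediate, since (i) is simply (ii) specialised to $(p,q)=(p_n,q_n)$. For (i)$\Rightarrow$(iii), we combine $\kappa/q_n\le|q_n\theta-p_n|<1/q_{n+1}$, which gives $q_{n+1}<q_n/\kappa$, so we may take $A=1/\kappa$.

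For (iii)$\Leftrightarrow$(iv), the recurrence gives $w_{n+1}q_n\le q_{n+1}\le (w_{n+1}+1)q_n$. Hence $q_{n+1}\le Aq_n$ forces $w_{n+1}\le A$. Conversely, if $w_n\le B$ for all $n$, then $q_{n+1}\le (B+1)q_n$. The initial index $n=0$ is handled by enlarging the constant if needed, since $q_0=1$.

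The implication (iii)$\Rightarrow$(ii) is the main step, and here we use Proposition~\ref{proposition: law of best approx}. Fix integers $p,q$ with $q\ge1$. Since $q_n\to\infty$, we can choose $n\ge 0$ with $q_n\le q<q_{n+1}$. Because $q<q_{n+1}$, the law of best approximation gives
\[
|q\theta-p|\ge|q_n\theta-p_n|>\frac{1}{q_n+q_{n+1}}\ge\frac{1}{(1+A)q_{n+1}}\ge\frac{1}{(1+A)Aq_n}\ge\frac{1}{A(1+A)q}.
\]
This is (ii) with $\kappa=1/(A(1+A))$. The only delicate points are bookkeeping. First, the proposition is stated for $n\ge1$, so small $q$ (say $q<q_1$) must be treated separately; there are only finitely many such $q$, and since $\theta$ is irrational, $\min_{q<q_1}\lVert q\theta\rVert>0$, so shrinking $\kappa$ covers them. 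Second, the lower estimate on $\lVert q_n\theta\rVert$ is not among the results stated earlier, so I would quote it from \cite[Chapter~1]{Bugeaud-ApproximationbyAlgebraicNumbers}. I expect this to be the only real obstacle, and a mild one.
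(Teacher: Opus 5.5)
Your proof is correct. The paper gives no proof of this proposition: it recalls it in the preliminaries as a standard fact, with Chapter~1 of Bugeaud's book as the reference. Your argument is the usual textbook one, built from three ingredients: the two-sided estimate $\frac{1}{q_n+q_{n+1}}<|q_n\theta-p_n|<\frac{1}{q_{n+1}}$, the recurrence $q_{n+1}=w_{n+1}q_n+q_{n-1}$, and the law of best approximation to pass from convergents to arbitrary $p/q$. So there is no different route in the paper to compare against.

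Only cosmetic remarks:
\begin{itemize}
\item In (iii)$\Leftrightarrow$(iv), the special treatment of $n=0$ is unnecessary, since $q_{n+1}\le(w_{n+1}+1)q_n$ already holds at $n=0$ with $q_{-1}=0$.
\item In (iii)$\Rightarrow$(ii), your first inequality silently uses $A\ge1$. This is automatic, since $q_2>q_1$. You can also shorten the chain via $q_n+q_{n+1}\le(1+A)q_n\le(1+A)q$, which gives $\kappa=1/(1+A)$ for $q\ge q_1$.
\item The identification $\lVert q_n\theta\rVert=|q_n\theta-p_n|$ in your displayed estimate fails at $n=0$ when $w_1=1$, because then $\{\theta\}>1/2$. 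You only ever use $|q_n\theta-p_n|$, so nothing is affected.
\end{itemize}

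Your separate handling of $q<q_1$, which is needed because the paper states the law of best approximation only for $n\ge1$, is correct.
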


\section{Spectrum of the refined Diophantine exponent}\label{section: spectrum of Rdio}

Our approach is motivated by seminal works on the pseudorandomness of finite words \cite{Mauduit-Sarkozy-1997-I,Mauduit-Sarkozy-1998-II,Mauduit-1999-III,Mauduit-1999-IV,Cassaigne-Mauduit-Sarkozy-2002-VII}. To measure the pseudorandomness, they use \emph{correlations} and study them from both probabilistic and arithmetic viewpoints (e.g. Champernowne, Rudin--Shapiro, Thue--Morse, and words arising from the Legendre symbol, etc). 

\begin{definition}\label{definition: Cbmkn Dbmkn}
For an infinite word $\mathbf{b}=b_0b_1\cdots$ over the alphabet $\{-1,+1\}$ and integers $m\geq0,k\geq1,N\geq1$, we set
\[C(\mathbf{b};m,k,N)=\left|\sum_{i=m}^{m+N-1}b_ib_{i+k}\right|\in[0,N].\]
We also set \[D(\mathbf{b};m,k,N) = \sum_{i=m}^{m+N-1} \frac{1 - b_i b_{i+k}}{2},\]which is  the exact number of mismatches between $\mathbf{b}[m,m+N-1]$ and $\mathbf{b}[m+k,m+N-1+k]$ (also known as the Hamming distance).
\end{definition}

\begin{remark}
    The $k$-correlation of $\mathbf{b}$ is known as the limit \[\displaystyle\lim_{N\to\infty}\frac{C(\mathbf{b};0,k,N)}{N}\in[0,1].\] Following \emph{loc. cit.}, \emph{pseudorandomness} means that the $k$-correlation is small relative to $N$, that is, the exact number of mismatches is large relative to $N$.
\end{remark}

To prove Theorem~\ref{Theorem: spectrum of Rdio}, we first find a suitable \emph{pseudorandom} word $\mathbf{b}$, then modify this word by using a lacunary sequence with a specified Diophantine exponent. We then show that the modified word inherits the desired refined Diophantine exponent. However, the results in \emph{loc. cit.} are not good enough for our purpose. For our construction in Theorem~\ref{Theorem: spectrum of Rdio}, we require the number of mismatches to be linearly large in $N$. More precisely, for $m, k = O(N)$, we need $D(\mathbf{b}; m, k, N) \gg N$. Translating this to $C(\mathbf{b}; m, k, N)$, we seek an infinite binary word subject to the following slightly stronger condition.

\begin{theorem}\label{theorem: existence of word}
There exists an infinite word $\mathbf{b}$ over $\{-1,+1\}$ such that 
\[C(\mathbf{b};m,k,N)\leq 4\sqrt{(m+k+N)\log(m+k+N)}\]
for all  $m\geq0,k\geq1,N\geq1$.
\end{theorem}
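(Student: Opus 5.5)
The plan is a probabilistic argument. We choose $\mathbf{b}=b_0b_1\cdots$ with the $b_i$ independent and uniform on $\{-1,+1\}$. We then show that, with positive probability (in fact with probability close to $1$), the inequality holds simultaneously for all triples $(m,k,N)$. Since each event is determined by finitely many coordinates, a union bound over the countably many triples suffices.

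\emph{Step 1: concentration for a fixed triple.} Fix $m\geq0$, $k\geq1$, $N\geq1$ and set $X_i=b_ib_{i+k}$ for $m\le i\le m+N-1$. Because $k\geq1$, these variables are $\pm1$-valued with mean zero, but they are not independent: $X_i$ and $X_{i+k}$ share the letter $b_{i+k}$. The cleanest fix is to note that $\sum_i X_i$ is a martingale. Order the indices increasingly and reveal $b_0,b_1,\dots$ one at a time. The term $X_i$ becomes known exactly when $b_{i+k}$ is revealed, and $b_{i+k}$ is a fresh uniform sign independent of everything revealed before it. Hence $\mathbb E[X_i\mid b_0,\dots,b_{i+k-1}]=0$, so the partial sums form a martingale with increments bounded by $1$. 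Azuma--Hoeffding then gives
\[
\mathbb P\bigl(C(\mathbf b;m,k,N)>\lambda\bigr)\le 2\exp\!\left(-\frac{\lambda^2}{2N}\right).
\]
One could alternatively split the indices into two classes, each consisting of independent terms (blocks of length $k$ alternating), and apply Hoeffding to each class. The martingale route avoids the resulting constant loss, which is why I prefer it.

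\emph{Step 2: union bound.} Put $M=m+k+N$ and $\lambda=4\sqrt{M\log M}$. Since $N\le M$, the exponent is at least $16M\log M/(2M)=8\log M$, so each failure probability is at most $2M^{-8}$. For fixed $M$, the number of triples with $m+k+N=M$ is at most $M^2$. Summing over $M\geq 3$ gives a total failure probability of at most $\sum_{M\ge3}2M^{-6}$, which is well below $1$.

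\emph{Step 3: small cases.} The small values of $M$ need a separate check. At $M=1$ no triple exists, since $k,N\ge1$ forces $M\ge2$. At $M=2$ we have $\log 2>0$ and the bound reads $4\sqrt{2\log2}\approx4.7\ge N=1$, so the inequality holds trivially because $C\le N$. More generally, the inequality is automatic whenever $N\le 4\sqrt{M\log M}$, so only large $M$ ever contribute to the union bound. With positive probability no bad event occurs, so some $\mathbf b$ satisfies all the inequalities.

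The main obstacle is the dependence among the products $b_ib_{i+k}$. I expect the martingale observation in Step 1 to handle it cleanly. The remaining work is bookkeeping of constants to make sure the factor $4$ suffices, and with exponent $8\log M$ against $M^2$ triples there is ample room.
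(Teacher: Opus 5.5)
Your proof is correct and follows the paper's argument essentially verbatim. Both use the same martingale $\sum_i b_ib_{i+k}$, adapted to the filtration generated by $b_0,\dots,b_{i+k}$, then Azuma--Hoeffding to get a failure probability of at most $2(m+k+N)^{-8}$, then a union bound over the fewer than $L^2$ triples with $m+k+N=L$. Your separate treatment of small $M$ in Step 3 is harmless but not needed, since the union bound already sums to less than $1$ when started at $M=2$.
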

\begin{proof}
We use the probabilistic method. We equip the set $\Omega=\{-1,+1\}^{\mathbb N}$ with the uniform Bernoulli product measure, meaning each letter $b_i$ is an independent random variable taking values $-1$ or $+1$ with probability ${1}/{2}$. We would like to apply the Azuma--Hoeffding inequality (see e.g. \cite[ Section~3.2]{Roch-2024}) to the stochastic process $\{X_0,X_1,\ldots\}$ defined by $X_0 = 0$ and \[ X_i = X_{i-1} + b_{m+i-1} b_{m+i-1+k}, \] with respect to the filtration \[\{\emptyset, \Omega\}=\mathcal{F}_{-1}\subset  \mathcal{F}_0\subset \mathcal{F}_1\subset\ldots\] where each $\mathcal{F}_i,i\geq0$, is defined as the smallest $\sigma$-algebra generated by $b_0, b_1,\ldots,b_{m+i-1+k}$. 

First, we claim that $X_1,X_2,\ldots$ is a martingale with respect to $\mathcal{F}_i$. Indeed, since $|X_i| \leq i$, we have \[ \mathbb{E}[|X_i|] \leq i < \infty. \]
In addition, we have
\begin{align*}
    \mathbb{E}[X_{i+1} \mid \mathcal{F}_i] &= \mathbb{E}[X_i + b_{m+i} b_{m+i+k} \mid \mathcal{F}_i]\\&= \mathbb{E}[X_i\mid \mathcal{F}_i]+ \mathbb{E}[b_{m+i} b_{m+i+k}\mid \mathcal{F}_i]\\&=\mathbb{E}[X_i\mid \mathcal{F}_i]+ b_{m+i}\mathbb{E}[ b_{m+i+k}\mid \mathcal{F}_i]\\&=X_i 
\end{align*}
since $\mathbb{E}[ b_{m+i+k}\mid \mathcal{F}_i]=0$. Thus $X_1,X_2,\ldots$ is a martingale.

Now we apply the Azuma--Hoeffding inequality to $X_1,\ldots,X_{N}$ to deduce that
\begin{align*}
\mathbb P(C(\mathbf{b};m,k,N)\geq 4\sqrt{(m+k+N)\log(m+k+N)})&\leq 2e^{\frac{-16(m+k+N)\log(m+k+N)}{2N}}\\&\leq 2(m+k+N)^{-8}.    
\end{align*}
 Here, we have used that $|X_{i+1}-X_{i}|\leq 1$ for all $i$. In addition, we note that for a given $L\geq2$, the number of triples $(m,k,N)$ such that $m+k+N=L$ is  ${(L-1)(L-2)}/{2}<L^2$, therefore 
\[\sum_{m,k,N}\mathbb P(C(\mathbf{b};m,k,N)\geq 4\sqrt{(m+k+N)\log(m+k+N)})\leq2\sum_{L\geq2}L^{-6}<1.\]
Using the union bound, it follows that there is a word $\mathbf{b}$ over $\{-1,+1\}$ such that \[C(\mathbf{b};m,k,N)\leq 4\sqrt{(m+k+N)\log(m+k+N)}\] for all $k,m,N$ as desired.
\end{proof}

\begin{remark}\label{remark: lower bound for Dbmkn}
    If we set $E(\mathbf{b};m,k,N)=|\{i\in[m,m+N-1]:b_i=b_{i+k}\}|$, then \[E(\mathbf{b};m,k,N)+D(\mathbf{b};m,k,N)=N\] and\begin{align*}
        |E(\mathbf{b};m,k,N)-D(\mathbf{b};m,k,N)|&=C(\mathbf{b};m,k,N)\\&\leq 4\sqrt{(m+k+N)\log(m+k+N)}.
    \end{align*}  It follows that \[D(\mathbf{b};m,k,N)\geq \frac{N}{2}-2\sqrt{(m+k+N)\log(m+k+N)}.\] In the next proof, we will apply this inequality to the following situation: $N$ runs over an infinite set of positive integers, the growth of $m$ and $k$ are $O(N)$, so $D(\mathbf{b};m,k,N)\gg N$ where the implied constant is independent of $N$. 
\end{remark}

Now we show that the spectrum of $\mathbf{Rdio}$ is $[1,\infty]$.
\begin{proof}[Proof of Theorem~\ref{Theorem: spectrum of Rdio}]
 It suffices to show that for every $C>1$, there exists an infinite word $\mathbf{a}$ over $\{-1,0,1\}$ such that $\mathbf{Rdio}(\mathbf{a})={(C+1)}/{2}$.

Let $\mathbf{b}$ be the word in Theorem \ref{theorem: existence of word}.  We consider the sequence $u_i=\lfloor C^i\rfloor$, and the word \[\mathbf{a}=0^{u_1}b_0\cdots b_{u_{1}-1}0^{u_2}b_{u_1}\cdots b_{u_1+u_2-1}0^{u_3}b_{u_1+u_2}\cdots b_{u_1+u_2+u_3-1}\cdots\] built from a block of $0$ of length $u_1$, then finite subword formed by the first $u_1$ letters of $\mathbf{b}$, then a block of $0$ of length $u_2$, and so on. Then it is clear that \[\mathbf{Rdio}(\mathbf{a})\geq\mathbf{Dio}(\mathbf{a}) = \displaystyle\lim_{l \to \infty} \left( 1 + \frac{u_l}{2 \sum_{i=1}^{l-1} u_i} \right)=\frac{C+1}{2}>1.\]

    Next, we show that $\mathbf{Rdio}(\mathbf{a})\leq {(C+1)}/{2}$.  Let $\rho>1$ be any real number such that $\rho<\mathbf{Rdio}(\mathbf{a})$. Then for every $\epsilon>0$, there exist sequences $(r_n),(s_n),(t_n)$ and $\delta\geq0$ with respect to $\rho$ as in the definition of $\mathbf{Rdio}(\mathbf{a})$. 
  
    We observe that for infinitely many $n$, if $\mathbf{a}[r_n+1,r_n+t_n-s_n]$ contains a whole block of finite words of $\mathbf{b}$, then there is some maximal $l$ such that $b_{u_1+u_2+\cdots+ u_{l-1}}\cdots b_{u_1+u_2+\cdots+u_l-1}$ belongs to $\mathbf{a}[r_n+1,r_n+t_n-s_n]$. Thus the number of mismatches is at least the number of $i$ such that $b_i\neq b_{i+s_n-r_n}$, where $u_1+u_2+\cdots+u_{l-1}\leq i\leq u_1+u_2+\cdots+u_l-1$. Therefore,
    \[(\rho-1)s_n\leq t_n-s_n\leq 2\left(\sum_{i=1}^{l+1}u_i\right)\ll u_l,\]
    while by Remark~\ref{remark: lower bound for Dbmkn}, the number of mismatches is at least
    \begin{align*}
        \frac{u_l}{2}-&\sqrt{(u_1+\cdots+ u_{l-1}+s_n-r_n+u_{l+1})\log(u_1+\cdots+ u_{l-1}+s_n-r_n+u_{l+1})}\\& \gg u_{l},
    \end{align*}
    a contradiction. It follows that, when $n$ is large enough, $\mathbf{a}[r_n+1,r_n+t_n-s_n]$ belongs to one of the following forms:
    \begin{itemize}
     \item It contains one (partial) block of $0$ and followed by one partial block of finite subwords of $\mathbf{b}$;
     \item It contains one partial block of finite subwords of $\mathbf{b}$ and followed by one (partial) block of $0$;
        \item It contains two partial blocks of finite subwords of $\mathbf{b}$ and a whole block of $0$s separating them.
    \end{itemize}
    In addition, if for infinitely many $n$, one of the blocks of finite words of $\mathbf{b}$ occurring in $\mathbf{a}[r_n+1,r_n+t_n-s_n]$, say $b_{u_1+u_2+\cdots+u_{l-1}+p}\cdots b_{u_1+u_2+\cdots+u_{l-1}+q}$, does not have length $o(u_l)$, then there exists some $c>0$ such that $q-p\geq c u_l$ for infinitely many $n$ and $l$. Then the number of mismatches is at least the number of $i,u_1+u_2+\cdots+p\leq i\leq u_1+u_2+\cdots+q$, such that $b_i\neq b_{i+s_n-r_n}$. As observed above, there are at most two partial blocks of finite words of $\mathbf{b}$. Again, we have \[(\rho-1)s_n\leq t_n-s_n\leq 2\left(\sum_{i=1}^{l+1}u_i\right)\ll u_l,\] while by Remark~\ref{remark: lower bound for Dbmkn}, the number of mismatches is at least
    \begin{align*}
        \frac{q-p}{2}-&\sqrt{(u_1+\cdots+ u_{l-1}+s_n-r_n+2u_{l+1})\log(u_1+\cdots+ u_{l-1}+s_n-r_n+2u_{l+1})}\\& \gg u_l,
    \end{align*}
 which is absurd. 
    
    It follows that there are the following three cases to consider.
    \begin{itemize}
        \item If for infinitely many $n$, we are in the following situation: $a_{r_n+1}=0$ belongs to $0^{u_l}$ and $a_{r_n+t_n-s_n}$ belongs to $b_{u_1+\cdots+u_{l-1}}\cdots b_{u_1+\cdots+u_{l}-1}$, then \[2\sum_{i=1}^{l-1}u_i\leq r_n\leq2\sum_{i=1}^{l-1}u_i+u_l-1\] and\begin{align*}
            \sum_{i=1}^{l-1}u_i+u_l&\leq r_n+t_n-s_n\\&\leq 2\sum_{i=1}^{l-1}u_i+u_l+o(u_l)\\&\leq 2\sum_{i=1}^{l}u_i-1.
        \end{align*}
        It follows that $s_n> r_n\geq 2\sum_{i=1}^{l-1}u_i-1$ and \begin{align*}
        \frac{t_n}{s_n}&\leq \frac{s_n+2\sum_{i=1}^{l-1} u_i+u_l+o(u_l)-r_n}{s_n}\\&\leq 1+\frac{u_l+o(u_l)}{2\sum_{i=1}^{l-1}u_i-1}.    
        \end{align*}
       We note that \[\lim_{l\to\infty }\left(1+\frac{u_l+o(u_l)}{2\sum_{i=1}^{l-1}u_i-1}\right)=\frac{C+1}{2}.\]
        \item If for infinitely many $n$, we are in the following situation:  $a_{r_n+1}$ belongs to $b_{u_1+\cdots+u_{l-1}}\cdots b_{u_1+\cdots+u_{l}-1}$ and $a_{r_n+t_n-s_n}$ belongs to $0^{u_{l+1}}$, then\begin{align*}
            2\sum_{i=1}^{l-1}u_i+u_l\leq r_n+1&=2\sum_{i=1}^{l-1}u_i+2u_l-o(u_l)\\&\leq2\sum_{i=1}^{l-1}u_i+2u_l-1,
        \end{align*}  and  \[2\sum_{i=1}^{l}u_i\leq r_n+t_n-s_n\leq 2\sum_{i=1}^{l}u_i+u_{l+1}-1.\]
         It follows that  $s_n>r_n\geq 2\sum_{i=1}^{l-1}u_i+u_l-1$ and 
        \[\frac{t_n}{s_n}\leq \frac{s_n+2\sum_{i=1}^l u_i+u_{l+1}-r_n}{s_n}\leq 1+\frac{u_{l+1}+o(u_l)}{2\sum_{i=1}^{l}u_i+o(u_l)}.\]
      We also have that \[\lim_{l\to\infty }\left(1+\frac{u_{l+1}+o(u_l)}{2\sum_{i=1}^{l}u_i+o(u_l)}\right)=\frac{C+1}{2}.\]
        \item If for infinitely many $n$, we are in the following situation:  $a_{r_n+1}$ belongs to $b_{u_1+\cdots+u_{l-1}}\cdots b_{u_1+\cdots+u_{l}-1}$ and $a_{r_n+t_n-s_n}$ belongs to \[b_{u_1+\cdots+u_{l}}\cdots b_{u_1+\cdots+u_{l}+u_{l+1}-1}\] then
        \begin{align*}
        2\sum_{i=1}^{l-1}u_i+u_l\leq r_n+1&=2\sum_{i=1}^{l}u_i-o(u_l)\\&\leq  2\sum_{i=1}^{l}u_i-1 
        \end{align*}
        and 
        \begin{align*}
            2\sum_{i=1}^{l}u_i+u_{l+1}&\leq r_n+t_n-s_n\\&\leq 2\sum_{i=1}^{l}u_i+u_{l+1}+o(u_{l+1})\\&\leq 2\sum_{i=1}^{l+1}u_i-1.
        \end{align*}
      It follows that 
      \begin{align*}
          \frac{t_n}{s_n}&\leq \frac{s_n+2\sum_{i=1}^{l}u_i+u_{l+1}+o(u_{l+1})-r_n}{s_n}\\&\leq 1+\frac{u_{l+1}+o(u_{l+1})+o(u_l)}{2\sum_{i=1}^l u_i-o(u_l)}.
      \end{align*}  
         Again, we have  \[\lim_{l\to\infty }\left(1+\frac{u_{l+1}+o(u_{l+1})+o(u_l)}{2\sum_{i=1}^l u_i-o(u_l)}\right)=\frac{C+1}{2}.\]
    \end{itemize}
  
  Combining these three cases, we deduce that $\rho\leq {(C+1)}/{2}$ for all $\rho<\mathbf{Rdio}(\mathbf{a})$. Therefore, $\mathbf{Rdio}(\mathbf{a})\leq{(C+1)}/{2}$. We conclude that $\mathbf{Rdio}(\mathbf{a})={(C+1)}/{2}$ as desired. 
\end{proof}

In view of \cite[Proposition~2.24]{Khai-2026}, we can modify the word $\mathbf{a}$ constructed in the previous proof to obtain the following result. The proof is a combination of the argument in \emph{ibid.} and the previous proof, so we omit it.
 
\begin{corollary}
    Over a quaternary base, for a given $C>1$, there exists an infinite word $\mathbf{a}$ such that $\mathbf{Dio}(a)<\mathbf{Rdio}(a)=C$.
\end{corollary}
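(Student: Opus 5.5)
We would build on the word $\mathbf{a}$ over $\{-1,0,1\}$ from the proof of Theorem~\ref{Theorem: spectrum of Rdio}, taken with a parameter $C'>1$ to be fixed later, and use the fourth letter, say $2$, to plant \emph{isolated noise}. The isolated noise does not change $\mathbf{Rdio}$ but lowers $\mathbf{Dio}$. This follows the idea of \cite[Proposition~2.24]{Khai-2026}. Concretely, inside each zero block $0^{u_l}$ we replace the zeros at positions forming a sparse but unbounded set by the letter $2$. For instance, we put a $2$ every $\lfloor u_l/g(l)\rfloor$ positions, where $g(l)\to\infty$ slowly (say $g(l)=l$). We call the result $\mathbf{a}'$.

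First we show $\mathbf{Rdio}(\mathbf{a}')=\mathbf{Rdio}(\mathbf{a})$. For the lower bound, we reuse the triples realizing $\mathbf{Dio}(\mathbf{a})$ from the previous proof, in which a zero block is compared with an earlier zero block. After the insertion, the two compared words differ in at most $O(g(l))$ positions. We cover each mismatch by an interval of length $1$, which is not enough because $\delta$ must be fixed while the number of mismatches grows. We therefore place the $2$'s instead in a bounded number (say one or two) of clusters per block, each of length $o(u_l)$ but tending to infinity. Then $\delta$ is bounded and the total mismatch is $o(u_l)$, which is $\le \epsilon L$ for large $l$. For the upper bound, the proof of Theorem~\ref{Theorem: spectrum of Rdio} carries over verbatim: the pseudorandom $\mathbf{b}$-blocks still force the three cases, and the extra letters only add mismatches. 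Hence $\mathbf{Rdio}(\mathbf{a}')=(C'+1)/2$.

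Next we bound $\mathbf{Dio}(\mathbf{a}')$ strictly below this value. An exact repetition $\mathbf{a}'[r+1,r+t-s]=\mathbf{a}'[s+1,t]$ with $s-r\to\infty$ cannot contain a long piece of a $\mathbf{b}$-block. It also cannot overlap a cluster of $2$'s unless the shift aligns clusters exactly, and the clusters are placed at irregular positions such as $\lfloor u_l/3\rfloor$ and $\lfloor 2u_l/3\rfloor+l$, which prevents this. So exact repetitions of length comparable to $u_l$ live in a cluster-free zero segment of length about $u_l/3$. A computation analogous to the three cases above then gives $\mathbf{Dio}(\mathbf{a}')\le 1+\frac{u_l/3}{2\sum_{i<l}u_i}\to 1+\frac{C'-1}{6}<\frac{C'+1}{2}$. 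Finally we choose $C'=2C-1$ to get $\mathbf{Rdio}=C$.

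The main obstacle is the upper bound on $\mathbf{Dio}$. We must control every exact repetition, including those straddling a cluster of $2$'s and a boundary between a $\mathbf{b}$-block and a zero block. We also must check that the aligned-cluster case cannot yield a large ratio. Here we would exploit that the cluster lengths and positions are distinct in distinct blocks, so two clusters never match exactly.
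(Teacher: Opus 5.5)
Your construction is the one the paper intends: it omits the proof, saying only that one combines the noise-insertion idea of \cite[Proposition~2.24]{Khai-2026} with the proof of Theorem~\ref{Theorem: spectrum of Rdio}. Your $\mathbf{Rdio}$ half is sound. Every match in $\mathbf{a}'$ is already a match in $\mathbf{a}$, so any witness of $(*)_\rho$ for $\mathbf{a}'$ also witnesses it for $\mathbf{a}$, which gives $\mathbf{Rdio}(\mathbf{a}')\le (C'+1)/2$. For the lower bound, boundedly many clusters of length $o(u_l)$ keep the in-block triples $(\epsilon,4)$-close. (Those triples compare $0^{u_l}$ with a shift of itself by a slowly growing period, not with an earlier zero block.)

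The gap is in the $\mathbf{Dio}$ bound, at the claim that irregular cluster positions prevent alignment. They do not. Two clusters of the same length can always be superposed by taking $k$ equal to the difference of their positions, and a window may also start in the tail or end in the head of a cluster. You only make lengths distinct across blocks. Suppose the two clusters of $0^{u_l}$ both have length $c_l$, at relative positions $a_1=\lfloor u_l/3\rfloor$ and $a_2=\lfloor 2u_l/3\rfloor+l$. Put $P_l=2\sum_{i<l}u_i$, $k=a_2-a_1$, $r+1=P_l+O(c_l)$, $s=r+k$, $t=P_l+u_l-1$. Then $\mathbf{a}'[r+1,r+t-s]=\mathbf{a}'[s+1,t]$: the first cluster is carried onto the second and zeros onto zeros. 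This gives $t/s\to 3(C'+1)/(C'+5)$, which exceeds your claimed $1+(C'-1)/6$ whenever $C'<7$. So that bound is unproved, and false for this choice of lengths.

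The conclusion survives for a different reason, and your write-up should use it. Exact repetitions of $\mathbf{a}'$ are exact repetitions of $\mathbf{a}$, so the structural part of the proof of Theorem~\ref{Theorem: spectrum of Rdio} applies. If $W=\mathbf{a}'[r+1,t-k]$ contains a noise site, its copy $k$ places later contains an identical one. Hence $k$ is at least the gap between two noise sites, and this inflates $s=r+k$.

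The simplest version puts a single letter $2$ in the middle of each zero block. Growing clusters are unnecessary, and this gives $\delta=2$.
\begin{itemize}
\item If $W$ avoids the $2$, then $\limsup t/s\le 1+(C'-1)/4$.
\item If $W$ contains it, then $k\ge 2u_l$, so $\limsup t/s\le 1+(C'-1)/(2C')$.
\end{itemize}
Both bounds are strictly below $(C'+1)/2$, and your choice $C'=2C-1$ then finishes the proof.
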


This corollary partly explains why \cite[Theorem~A]{Khai-2026} provides a new and stronger combinatorial transcendence criterion than those in \cite{Adamczewski-Bugeaud-2007-dynamics,Luca-Ouaknine-Worrell-2025,Luca-Ouaknine-Worrell-2023,Kebis-Luca-Ouaknine-Scoones-Worrell-2024,Kebis-Luca-Ouaknine-Scoones-Worrell-2025}, since it applies to a broader family of words.

\section{Topological properties of the refined Diophantine exponent}\label{section: topo properties}
We start this section with a proof of Theorem~\ref{Theorem: probability of Rdio=1} saying that $\mathbb{P}(\mathbf{Rdio}(\mathbf{a}) = 1) = 1$.
\begin{proof}[Proof of Theorem~\ref{Theorem: probability of Rdio=1}]
  Since $\mathbf{Rdio}(\mathbf{a})=1$ if and only if $\mathbf{Dio}(\mathbf{a})=1$ by \cite[Proposition~2.1]{Khai-2026}, it suffices to show that $\mathbb{P}(\mathbf{Dio}(\mathbf{a}) > 1) = 0$. We will use the Borel--Cantelli lemma. 

    Let $q = |\Sigma| \geq 2$. Let $\eta>0$ be a rational number.  For any $-1\leq r<s<t$, we have
\[\mathbb{P}(\mathbf{a}[r+1, r+t-s] = \mathbf{a}[s+1, t]) \leq q^{s-t}.\]

For each $s\geq0$, let $A_s$ be the event that for the index $s$, there exists some $r \in [-1, s-1]$ and some $t \geq (1+\eta) s$ such that $\mathbf{a}[r+1, r+t-s] = \mathbf{a}[s+1, t]$. For each $s\geq0$, we apply the union bound over all $s+1$ choices for $r$ and all possible choices of $t$ to deduce that \[\mathbb{P}(A_s) \leq \sum_{r=-1}^{s-1} \sum_{t = \lceil \eta s \rceil+s}^{\infty} q^{s-t} \leq (s+1) \left( \frac{q^{-\eta s}}{1 - q^{-1}} \right).\]
    It follows that
    \[\sum_{s=0}^{\infty} \mathbb{P}(A_s) \leq \frac{1}{1 - q^{-1}} \sum_{s=0}^{\infty} (s+1) (q^{-\eta})^s.\]
   Note that $q^{-\eta} < 1$. Therefore, \[\sum_{s=1}^{\infty} \mathbb{P}(A_s) < \infty.\]
   By the Borel--Cantelli lemma, the probability that the event $A_s$ happens for infinitely many $s$ is exactly $0$.

Now, taking the countable union over all rational numbers $\eta$, we obtain $\mathbb{P}(\mathbf{Dio}(\mathbf{a}) > 1) = 0$ as wanted. Here, recall that for an infinite word $\mathbf{a}$, if $\mathbf{Dio}(\mathbf{a}) > 1$, then there exist a rational number $\eta > 0$ and sequences of integers $(r_n)$, $(s_n)$, $(t_n)$ satisfying conditions in Definition~\ref{Definition: Dio} with $\rho=1+\eta$. In particular, $\mathbf{a}$ satisfies the event $A_{s_n}$ for all $n$ sufficiently large.
\end{proof}

Next, we prove Theorem~\ref{Theorem: density of Rdio=infty} on the density of $\mathbf{Rdio}$ in the Cantor topology. We need the following \emph{shift-invariant} property of $\mathbf{Rdio}$.

\begin{proposition}\label{proposition: Rdio is shift invariant}
Let $\mathbf{a}=a_0a_1\cdots$ be an infinite word and $m$ be a positive integer. We set $\mathbf{a}'=a_ma_{m+1}\cdots$. Then $\mathbf{Rdio}(\mathbf{a})=\mathbf{Rdio}(\mathbf{a}')$. 
\end{proposition}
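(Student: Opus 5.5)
We prove the two inequalities $\mathbf{Rdio}(\mathbf{a}')\geq\mathbf{Rdio}(\mathbf{a})$ and $\mathbf{Rdio}(\mathbf{a})\geq\mathbf{Rdio}(\mathbf{a}')$ separately. In each case we start from witnessing sequences for Condition $(*)_\rho$ on one word and translate the indices by $\pm m$ to get witnessing sequences on the other word. The exponent may drop by an arbitrarily small amount in this process. Since $\mathbf{Rdio}$ is a supremum, it suffices to show: if one word satisfies $(*)_\rho$, then the other satisfies $(*)_{\rho'}$ for every $\rho'<\rho$. (If the supremum is $1$ there is nothing to prove, since both exponents are at least $1$.)

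\emph{From $\mathbf{a}$ to $\mathbf{a}'$.} Fix $\epsilon>0$ and let $(r_n),(s_n),(t_n),\delta$ witness $(*)_\rho$ for $\mathbf{a}$. The main obstacle is that $r_n+1$ may be smaller than $m$, so the first block does not lie entirely in $\mathbf{a}'$.

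First, $s_n\to\infty$: indeed $s_n-r_n\to\infty$ and $r_n\geq -1$. Then $t_n-s_n\geq(\rho-1)s_n\to\infty$ when $\rho>1$.

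Set $r_n'=\max(r_n,m-1)-m$ and $d=\max(0,m-1-r_n)\leq m$. Truncate both blocks by removing their first $d$ letters. Thus we compare $\mathbf{a}[r_n+d+1,r_n+t_n-s_n]$ with $\mathbf{a}[s_n+d+1,t_n]$, which in $\mathbf{a}'$ coordinates is the pair of indices $r_n'<s_n'=s_n+d-m<t_n'=t_n-m$.

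\emph{Closeness is preserved.} Restrict the same $\delta$ intervals to the shortened range. The union has size at most $\epsilon(t_n-s_n)$, and the new length is $t_n-s_n-d$. Since $d\leq m$ is bounded and $t_n-s_n\to\infty$, the ratio is at most $2\epsilon$ for $n$ large. As $\epsilon$ is arbitrary, this suffices.

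\emph{The other conditions.} We have $s_n'-r_n'=s_n-r_n\to\infty$. The ratio condition reads $t_n'/s_n'=(t_n-m)/(s_n+d-m)$, which tends to the same limit behaviour as $t_n/s_n$ because $s_n\to\infty$. Hence $t_n'\geq\rho' s_n'$ for $n$ large. We discard finitely many $n$ and reindex.

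\emph{From $\mathbf{a}'$ to $\mathbf{a}$.} This direction is easier: shift all indices by $+m$. The blocks are literally identical, $s-r$ is unchanged, and $t_n+m\geq\rho'(s_n+m)$ for large $n$ since $s_n\to\infty$. Condition (i) holds because $r_n+m\geq -1$.

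Combining the two directions gives equality.
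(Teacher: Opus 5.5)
Your proof is correct and follows the paper's argument. Your unified choice $r_n'=\max(r_n,m-1)-m$, with both blocks truncated by $d=\max(0,m-1-r_n)$, reproduces the paper's two cases ($r_n'=r_n-m$ when $r_n\geq m$; $r_n'=-1$, $s_n'=s_n-r_n-1$, $t_n'=t_n-m$ when $r_n<m$), and your reverse direction spells out the paper's ``symmetric argument.'' Your version is in fact more careful than the paper's: you shift $s_n,t_n$ by $-m$ in the first case, whereas the paper's $s_n'=s_n$, $t_n'=t_n$ does not match the blocks, and you explicitly account for the small losses in $\rho$ and $\epsilon$, which the paper glosses over with ``$\lim t_n'/s_n'=\rho$.''
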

\begin{proof}
    For any $1\leq \rho\leq\mathbf{Rdio}(\mathbf{a})$ and $\epsilon>0$, let $(r_n)$, $(s_n)$, $(t_n)$, and $\delta$ be the corresponding data. We consider $n$ sufficiently large so that $s_n-r_n>m$. We have two cases.
    \begin{itemize}
        \item If $m\leq r_n$  for infinitely many $n$, then we set $r_n'=r_n-m$, $s_n'=s_n$, $t_n'=t_n$. We have $\mathbf{a}'[r_n'+1,r_n'+t_n'-s_n']=\mathbf{a}[r_n+1,r_n+t_n-s_n]$, $\mathbf{a}'[s_n'+1,t_n']=\mathbf{a}[s_n+1,t_n]$ and $t_n\geq\rho s_n$. Hence $\mathbf{Rdio}(\mathbf{a}')\geq\rho$.
        \item If $m> r_n$, then we set $r_n'=-1$, $s_n'=s_n-r_n-1$, $t_n'=t_n-m$. We have $\mathbf{a}'[r_n'+1,r_n'+t_n'-s_n']=\mathbf{a}[m,r_n+t_n-s_n]$, $\mathbf{a}'[s_n'+1,t_n']=\mathbf{a}[s_n-r_n+m,t_n]$ and $ \lim {t_n'}/{s_n'}=\rho$. Thus  $\mathbf{Rdio}(\mathbf{a}')\geq\rho$.
    \end{itemize} 
    It follows that  $\mathbf{Rdio}(\mathbf{a}')\geq\mathbf{Rdio}(\mathbf{a})$. A symmetric argument yields that  $\mathbf{Rdio}(\mathbf{a}')\leq\mathbf{Rdio}(\mathbf{a})$, hence finishing the proof.
\end{proof}

\begin{remark}
      The Diophantine exponent is also shift-invariant, see \cite[Proposition~6.4]{Peltomaki-2024}.
\end{remark}

\begin{proof}[Proof of Theorem~\ref{Theorem: density of Rdio=infty}]
 Let $c\in[1,\infty]$. To prove density in the Cantor topology, we must show that for an arbitrary finite word $w$, there exists at least one infinite word $\mathbf{a} \in [w]$ such that $\mathbf{Rdio}(\mathbf{a}) =c $.

    By Theorem~\ref{Theorem: spectrum of Rdio}, there is $\mathbf{a}'$ such that $\mathbf{Rdio}(\mathbf{a}') = c$. We then set $\mathbf{a}=w\mathbf{a}'$, by Proposition~\ref{proposition: Rdio is shift invariant} we have  \[\mathbf{Rdio}(\mathbf{a})=\mathbf{Rdio}(\mathbf{a}') = c.\]
    The theorem follows.
\end{proof}

\begin{remark}
   In fact, the proof shows that for any alphabet $\Sigma$ with $|\Sigma| \geq 2$, the set of infinite words over $\Sigma$ whose refined Diophantine exponent equals $1$ (resp., $\infty$) is dense.
\end{remark}

\section{The Champernowne word}\label{section: Champernowne}
In this section, we show that the $\mathbf{Rdio}$ (and $\mathbf{Dio}$) of the {Champernowne word} is equal to $1$. Recall that the Champernowne word $\mathbf{c}$ is an infinite word defined by sequentially concatenating the representations of consecutive integers $1, 2, \ldots$ in base $10$. Here are its first letters
\[\mathbf{c}=1234567891011121314151617181920\cdots\]

It is the most famous example of a \emph{normal} word. This means that every possible finite block of digits of length $k$ appears in the sequence with a uniform frequency of $10^{-k}$. Because it is normal, it contains every possible finite sequence of digits. Consequently, its subword complexity is maximal, which intuitively suggests that its $\mathbf{Rdio}$ should be small, even equal to $1$. However, one can manually construct a normal word whose $\mathbf{Rdio}$ is infinite. In fact, the proof of the following theorem does not use the normality of the Champernowne 
word.

\begin{theorem}We have $\mathbf{Rdio}(\mathbf{c})=\mathbf{Dio}(\mathbf{c})=1$.
\end{theorem}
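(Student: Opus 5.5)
By the remark after Definition~\ref{Definition: refined Dio}, $\mathbf{Rdio}(\mathbf{a})=1$ if and only if $\mathbf{Dio}(\mathbf{a})=1$ (see \cite[Proposition~2.1]{Khai-2026}). So it suffices to prove $\mathbf{Dio}(\mathbf{c})=1$. Suppose instead that there exist $\eta>0$ and sequences $(r_n),(s_n),(t_n)$ with $t_n\geq(1+\eta)s_n$, $s_n-r_n\to\infty$, and $\mathbf{c}[r_n+1,r_n+t_n-s_n]=\mathbf{c}[s_n+1,t_n]$. Write $k_n=s_n-r_n$ for the period and $L_n=t_n-s_n\geq \eta s_n$ for the repeated length. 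Then the factor $W_n=\mathbf{c}[r_n+1,t_n]$ has period $k_n$ and length $k_n+L_n$, and $L_n\geq\eta s_n\geq \eta k_n$. Thus $\mathbf{c}$ contains factors of exponent at least $1+\eta$ whose period $k_n$ tends to infinity, and these factors start at positions of order at most $s_n\le L_n/\eta$. The goal is to rule this out.

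First, locate the factors. The prefix of $\mathbf{c}$ of length $s$ ends inside the block of $d$-digit integers, where $d\asymp\log s$. So the window $\mathbf{c}[s_n+1,t_n]$, whose length is $L_n\ge \eta s_n$, lies within blocks of integers having $d$ or $d+1$ digits, and it contains $\gg L_n/d$ consecutive integers. Similarly, $\mathbf{c}[r_n+1,r_n+L_n]$ is a long stretch of consecutive integers with about $d$ digits. Since $L_n\gg k_n$, the whole factor $W_n$ contains at least $\gg k_n/d \gg k_n/\log k_n$ complete consecutive integer representations, which tends to infinity.

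Second, show that a long stretch of consecutive numbers cannot be $k$-periodic for a large $k$. Restrict to a sub-stretch of $W_n$ in which all integers have the same length $d$; this sub-stretch still has length $\gg L_n$ if the stretch crosses at most one length change. Within it, the concatenation of $N,N+1,\dots$ has the following structure: the units digit cycles with period $d$ in position and period $10$ in value, the tens digit changes every $10d$ letters, and so on. If the stretch has period $k$, compare the positions $p$ and $p+k$. There are two cases. If $d\nmid k$, then the digit positions of $p$ and $p+k$ are misaligned modulo $d$. Take $p$ at the units digit of $N+j$, as $j$ runs through $10$ consecutive values. The letter at $p$ takes all ten values, while the letter at $p+k$ is a fixed non-units digit of another integer, which is constant over $10$ consecutive $j$ outside carries. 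This gives a contradiction. If $d\mid k$, the shift sends integer $M$ to $M+k/d$, and periodicity forces $M\equiv M+k/d$ digitwise for $\gg k/d$ consecutive $M$. The units digit forces $10\mid k/d$. Iterating over the higher digits forces $10^j\mid k/d$ for every $j\le d$, which is impossible since $k/d\ge1$ and the shift maps $d$-digit numbers to $d$-digit numbers, so $k/d<10^d$.

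The main obstacle is handling these carries and the length changes cleanly. Concretely, one must check that the stretch of $W_n$ consisting of same-length integers is long relative to $k_n$, namely at least $k_n+10^{j}d$ for the needed $j$. The second-case argument as sketched needs the stretch to contain roughly $10^{j}$ integers to force $10^j\mid k/d$. I would first try a cheaper contradiction. The integer $M$ and $M+k/d$ must have equal representations; taking $M$ ranging over an interval of length $k/d+1$, some $M$ in it has a last-digit pattern that differs from that of $M+k/d$ unless $k/d\equiv 0$. More simply, the map $M\mapsto M+k/d$ on integers is injective and never the identity, so $M$ and $M+k/d$ are distinct $d$-digit strings, and a single comparison suffices. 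This only needs one full integer $M$ together with $M+k/d$ inside the window, and the window length $k_n+L_n\ge k_n+\eta k_n$ provides it. The crossing of the length boundary $10^{d}$ can be handled by choosing $M$ on the side with the longer portion of the window, which still has length $\gg \eta k_n$.
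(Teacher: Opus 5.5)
Your argument is correct in the case you actually treat: when $i$ and $i+k_n$ are both read inside one run of $d$-digit integers. There the units-digit argument handles $d\nmid k_n$, and $M\neq M+k_n/d$ handles $d\mid k_n$. The gap is that the hypothesis does not put you in that case.

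You only know $L_n\ge\eta s_n\ge\eta k_n$ with $\eta>0$ arbitrary. So when $L_n<k_n$, the two occurrences $\mathbf{c}[r_n+1,r_n+L_n]$ and $\mathbf{c}[s_n+1,t_n]$ are disjoint and may lie in runs of different digit lengths. For instance, nothing in your argument excludes the following configuration:
\begin{itemize}
\item[(a)] $\eta=1/2$;
\item[(b)] $s_n$ at the start of the $(d+1)$-digit run (position $\approx d\,10^d$);
\item[(c)] $r_n+1$ at the start of the $d$-digit run;
\item[(d)] $L_n\approx s_n/2<9d\,10^{d-1}$.
\end{itemize}
Then for \emph{every} admissible $i$, the letter $c_i$ is read among $d$-digit integers and $c_{i+k_n}$ among $(d+1)$-digit integers, so neither of your cases applies.

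Your repair, choosing $M$ ``on the side with the longer portion of the window'', does not work. Periodicity only constrains the pairs $(i,i+k_n)$ with $r_n+1\le i\le r_n+L_n$. If a length change falls between $r_n+L_n$ and $s_n+1$, no such pair has both ends on the same side. A portion of length $\gg\eta k_n$ but less than $k_n$ therefore carries no information at all. For the same reason, restricting to a same-length sub-stretch ``of length $\gg L_n$'' is useless unless that sub-stretch exceeds $k_n$ by a few multiples of $d$. Finally, the assumption that $W_n$ crosses at most one length change fails outright when $r_n+1$ is small, e.g.\ $r_n=-1$.

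What is missing is an argument that does not rely on aligning integer boundaries between the two occurrences. Concretely, you need a reason why a long factor cannot be shared by a concatenation of consecutive $d'$-digit integers and one of consecutive $d$-digit integers with $d'\neq d$, or by stretches crossing several runs.

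The paper sidesteps this with a marker argument. It uses the blocks $B(x)=x\,9^m(x+1)\,0^m$ with $m=\lfloor K/2\rfloor$. These are asserted to occur only once in the relevant portion of $\mathbf{c}$, and they recur every $\approx 10^mK$ letters near position $s_n$. Hence $\mathbf{c}[s_n+1,t_n]$ cannot contain one, since its copy $k_n$ letters earlier would be a second occurrence. This gives $L_n\ll 10^{K/2}K=o(s_n)$. It needs no case distinction on $k_n$ modulo $d$, nor on which run the first occurrence lies in.
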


\begin{proof}It suffices to show that $\mathbf{Dio}(\mathbf{c})=1$. Let $U V^w$ be a prefix of $\mathbf{c}$ where $U=\mathbf{c}[0,r]$, $V=\mathbf{c}[r+1,s]$ and $V^w=\mathbf{c}[r+1,t]$. Let $K$ be the number of digits of the integer output at $\mathbf{c}_s$; then 
\[s \geq \sum_{i=1}^{K-1}10^{i-1}9i = \frac{(9K - 10)10^{K-1} + 1}{9}.\]

Next, we need to bound from above the length of $U V^w$ in terms of $K$. Let $m = \lfloor {K}/{2} \rfloor$. For each $x$, we consider the following finite subwords of $\mathbf{c}$:
\[B(x) = x \underbrace{9\dots9}_{m} (x+1) \underbrace{0\dots0}_{m}.\]
We consider the following cases of $x$:
\begin{itemize}
    \item If $x$ is an integer of $\lceil {K}/{2} \rceil$ digits strictly less than $\underbrace{9\dots9}_{\lceil {K}/{2} \rceil}$, then $B(x)$ has length $2K$, and the distance between two consecutive occurrences $B(x)$ and $B(x+1)$ is $10^m K$. 
    \item If $x = \underbrace{9\dots9}_{\lceil {K}/{2} \rceil}$, then $B(x)$ has length $2K+1$, and the distance between $B(x)$ and $B(x+1)$ is $10^m(K+1) - 1$. 
    \item If $x$ is an integer of $\lceil {K}/{2} \rceil + 1$ digits strictly less than $\underbrace{9\dots9}_{\lceil {K}/{2} \rceil + 1}$, then $B(x)$ has length $2K+2$, and the distance between two consecutive occurrences $B(x)$ and $B(x+1)$ is $10^m(K+1)$.
\end{itemize}
A key observation is that for those $x$, $B(x)$ appears exactly once when restricting $\mathbf{c}$ to its finite subwords built from integers of $K$ and $K+1$ digits. 

Now, let $V^{w'-1}=\mathbf{c}[s+1,t']$ be the prefix of $V^{w-1}=\mathbf{c}[s+1,t]$ when restricting to integers of $K$ and $K+1$ digits. If $B(x)$ lies in $\mathbf{c}[s+1,t']=V^{w'-1}$ for one of those $x$, then due to the $|V|$-periodicity of $\mathbf{c}[r+1,t]=V^w$, the same word $B(x)$ must also appear shifted to the left by $|V|$ positions in $\mathbf{c}$. This implies $B(x)$ appears at least twice in $\mathbf{c}[r+1,t']$, which is absurd. Thus, $B(x)$ does not lie in $\mathbf{c}[s+1,t']$ for those $x$. So we must have $t'=t$ (and $w'=w$) and 
\[|V^{w-1}| \leq (K+1) + 10^m(K+1) + K+1 \leq (10^{\frac{K}{2}}+2)(K+1).\]
It follows that \[\frac{|UV^w|}{|UV|} \leq   1 + \frac{(10^{\frac{K}{2}}+2)(K+1)}{\frac{(9K - 10)10^{K-1} + 1}{9}} .\] 
Let $K$ tend to infinity, we conclude that $\mathbf{Dio}(\mathbf{c}) = 1$.
\end{proof}

\section{The Rudin--Shapiro and Thue--Morse words}\label{section: Thue--Morse}
In this section, we give an upper bound for the refined Diophantine exponent of the Rudin--Shapiro and Thue--Morse words. We focus first on the Thue--Morse word. The approach for the Rudin--Shapiro word is similar.

Recall that the Thue--Morse word $\mathbf{T}$\footnote{We use $\mathbf{T}$ instead of the usual notation $\mathbf{t}$ in order to avoid confusion with the sequence $(t_n)_{n\geq0}$ in the definition of $\mathbf{Rdio}$.} is the unique fixed point of the morphism $\varphi$ on the free semigroup generated by $\{0,1\}$ defined by $\varphi(0) = 01$ and $\varphi(1) = 10$.  

In the following proof, we use an induction argument involving carries in base 2 to establish a lower bound, linear in $N$, for the quantity $D(\mathbf{T};m,k,N)$. The general idea is that if there exist constants $ C_1,C_2 > 0$ such that\[D(\mathbf{T};m,k,N) \geq C_1 N - C_2 k,\]then one can easily bound $\mathbf{Rdio}$ from above by $1 + {C_2}/{C_1}$. To this end, we follow the computational strategy in \cite{Mauduit-Sarkozy-1998-II}. Note that this condition is weaker than the one in Remark~\ref{remark: lower bound for Dbmkn}, so the construction in the proof of Theorem~\ref{Theorem: spectrum of Rdio} does not work with the Thue--Morse word. 

\begin{theorem}\label{theorem: Thue Morse}
The refined Diophantine exponent of the Thue--Morse sequence is at most $25$.
\end{theorem}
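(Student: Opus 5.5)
The plan is to establish a linear lower bound of the form
\[D(\mathbf{T};m,k,N)\;\geq\; C_1N-C_2k\qquad(m\geq0,\ k\geq1,\ N\geq1)\]
with $C_2/C_1\leq 24$, and then to deduce $\mathbf{Rdio}(\mathbf{T})\leq 1+C_2/C_1\leq 25$. For this we view $\mathbf{T}$ as the $\pm1$ word $T_i=(-1)^{s_2(i)}$, where $s_2$ is the binary digit sum, so that $T_iT_{i+k}=(-1)^{s_2(i+k)-s_2(i)}$.

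\textbf{Step 1: from the bound to $\mathbf{Rdio}$.} Suppose $\rho<\mathbf{Rdio}(\mathbf{T})$, fix a small $\epsilon>0$, and take $(r_n),(s_n),(t_n)$ and $\delta$ from Definition~\ref{Definition: refined Dio}. Put $k=s_n-r_n$ and $N=t_n-s_n$. Since $(\epsilon,\delta)$-closeness forces the Hamming distance to be at most $\epsilon N$, we get $\epsilon N\geq D(\mathbf{T};r_n+1,k,N)\geq C_1N-C_2k$. Hence $N\leq C_2k/(C_1-\epsilon)$. As $k\leq s_n+1$ and $N\geq(\rho-1)s_n$, letting $n\to\infty$ and then $\epsilon\to0$ gives $\rho\leq 1+C_2/C_1$. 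This step is routine.

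\textbf{Step 2: the lower bound, by induction on $k$ via carries.} Following \cite{Mauduit-Sarkozy-1998-II}, we split the analysis according to the parity of $k$ and of $i$.

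If $k=2k'$ is even, then $s_2(2j+\varepsilon+2k')-s_2(2j+\varepsilon)=s_2(j+k')-s_2(j)$. So the window of length $N$ splits into two interleaved windows of length about $N/2$ with shift $k'$, which gives $D(\mathbf{T};m,2k',N)\geq 2D(\mathbf{T};m',k',\lfloor N/2\rfloor)-O(1)$.

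If $k=2k'+1$ is odd, pair up $i=2j$ and $i=2j+1$. For $i=2j$ we get $s_2(2j+2k'+1)-s_2(2j)=s_2(j+k')+1-s_2(j)$, which is the shift-$k'$ correlation with its sign flipped. For $i=2j+1$ the difference is the shift-$(k'+1)$ correlation, again with a sign flip coming from the lost trailing $1$. Therefore the number of mismatches at shift $2k'+1$ equals the number of \emph{agreements} at shifts $k'$ and $k'+1$ on half-windows.

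The base case is $k=1$. Here $T_iT_{i+1}=-1$ exactly when the number of trailing ones of $i$ is even, which happens for about $2/3$ of all $i$ in any window, up to an $O(\log N)$ error; in particular it gives at least about $N/3$ mismatches up to a bounded correction.

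The key difficulty is the odd step, because it converts mismatches into agreements. To control it I would prove a two-sided statement simultaneously: both $D$ and $E=N-D$ are at least $c N - c' k$. This works because shifts $k'$ and $k'+1$ cannot both have nearly all agreements. Indeed, if both had density $1-o(1)$ of agreements, then shift $1$ would too (the $T$'s at $i+k'$ and $i+k'+1$ would both agree with $T_i$, forcing $T_{i+k'}=T_{i+k'+1}$), contradicting the base case.

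\textbf{Main obstacle.} The hardest part will be choosing the constants so that the induction closes with $C_2/C_1\leq24$. Each halving step doubles the error term proportional to $k$, while the loss from window truncation is additive, so the recursion on $k$ must be summed carefully, much like an estimate $N\geq 24k$ being needed before mismatches appear. My first attempt would be the induction hypothesis
\[\min\bigl(D(\mathbf{T};m,k,N),\,E(\mathbf{T};m,k,N)\bigr)\geq \tfrac{N}{c}-\tfrac{24}{c}\,k\]
for a suitable constant $c$, checking both parities of $k$ directly. If the constants fail to close, I would weaken to $C_2/C_1\leq 24$ by bounding $N/k$ directly using the fact that $\mathbf{T}$ is overlap-free.
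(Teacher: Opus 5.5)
Your approach is correct but organized differently from the paper's, and if carried out carefully it gives a better constant. Both arguments use the same parity identities and a two-sided bound: the paper's upper bound $D(\mathbf{T};0,k,2^p)\le\frac23 2^p+k$ is exactly a lower bound on $E$.

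The paper inducts on $p$ only for $m=0$ and $N=2^p$. There the densities $\frac13,\frac23$ hold up to an error $k$. It then passes to an arbitrary window by locating a dyadic block $[c2^p,(c+1)2^p)$ inside $[m,m+N-1]$, with $2^{p+1}\le N<2^{p+2}$, and using $T_{c2^p+x}=T_cT_x$. This transfer loses a factor $4$ in length and another $k$, which is where $N/12-2k$ and the value $25$ come from.

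You induct on $k$ over all windows at once, so no transfer step is needed. The bookkeeping you flag as the main obstacle disappears if you split the window exactly by the parity of the index, rather than comparing with a single half-window. Let $N_0+N_1=N$ be the numbers of even and odd indices in $[m,m+N-1]$. Then, for suitable $m_0,m_1$,
\begin{align*}
D(\mathbf{T};m,2k',N)&=D(\mathbf{T};m_0,k',N_0)+D(\mathbf{T};m_1,k',N_1),\\
D(\mathbf{T};m,2k'+1,N)&=E(\mathbf{T};m_0,k',N_0)+E(\mathbf{T};m_1,k'+1,N_1),
\end{align*}
and the same identities hold with $D$ and $E$ exchanged. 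Set $g(k,N)=\min_{m}\min\bigl(D(\mathbf{T};m,k,N),E(\mathbf{T};m,k,N)\bigr)$. Then $g(k,N)\ge g(\lfloor k/2\rfloor,N_0)+g(\lceil k/2\rceil,N_1)$ for $k\ge2$. A hypothesis of the form $aN-bk$ therefore reproduces itself exactly, because the two shifts add up to $k$, and there is no truncation loss to accumulate.

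For the base case, $D(\mathbf{T};m,1,N)\ge\lfloor N/2\rfloor$ (count the even $i$) and $E(\mathbf{T};m,1,N)\ge\lfloor N/4\rfloor$ (count $i\equiv1\pmod 4$, i.e.\ exactly one trailing $1$). This gives $\min(D,E)\ge N/4-3k/4$ for all $m,k,N$, and your Step~1 then yields $\mathbf{Rdio}(\mathbf{T})\le 4$. Your own choice $c=4$ in $N/c-24k/c$ also closes and gives $25$.

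Three remarks. First, the base case must be checked for $E$ as well, and there the density $\frac13$ is not available with a bounded error. For $N=(4^L-1)/3$ one computes $E(\mathbf{T};0,1,N)=(N-L)/3$, so your constant $c$ must be strictly larger than $3$.

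Second, what makes the two-sided induction close is the exchange $D\leftrightarrow E$ in the odd step together with the additivity of the shifts. It is not the qualitative fact that shifts $k'$ and $k'+1$ cannot both nearly agree.

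Third, drop the overlap-free fallback. Overlap-freeness only controls exact repetitions (it gives $\mathbf{Dio}\le 2$). It says nothing about repetitions with a positive proportion of mismatches, which is why the paper leaves finiteness of $\mathbf{Rdio}$ for overlap-free words open.
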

\begin{proof}
For convenience, we replace $\{0, 1\}$ by $\{1, -1\}$, so $T_n = (-1)^{s_2(n)}$, where $s_2(n)$ is the sum of the binary digits of $n$. It follows that $T_{2i} = T_i$ and $T_{2i+1} = -T_i$. Recall from Definition~\ref{definition: Cbmkn Dbmkn} that for $m\geq0$, $k\geq1$ and $N\geq1$, the number of mismatches between $\mathbf{T}[m,m+N-1]$ and $\mathbf{T}[m+k,m+N-1+k]$  is

\[D(\mathbf{T};m,k,N) = \sum_{i=m}^{m+N-1} \frac{1 - T_i T_{i+k}}{2}.\]
We denote   \[e_i(k)=\frac{1 - T_i T_{i+k}}{2}.\] We have the following recurrence relations $e_{2i}(2k) = e_i(k)$ and $e_{2i+1}(2k) = e_i(k)$. Thus\[D(\mathbf{T};0, 2k, 2N) = 2D(\mathbf{T};0, k, N).\]
Similarly, from $e_{2i}(2k+1) = 1 - e_i(k)$ and $e_{2i+1}(2k+1) = 1 - e_i(k+1)$, we deduce that 
    \[D(\mathbf{T};0, 2k+1, 2N) = 2N - D(\mathbf{T};0, k, N) - D(\mathbf{T};0, k+1, N).\]

\begin{claim}\label{claim: bounds for D0k2p}
   For $k\geq1$ and $p\geq0$, we have \[\frac{1}{3}2^p - k \leq D(\mathbf{T};0, k, 2^p) \leq \frac{2}{3}2^p + k.\]
\end{claim}
\begin{proof}[Proof of the claim]
  We prove by induction on $p$. Since $D(\mathbf{T};0, k, 1) \in \{0, 1\}$, the base case $p=0$ holds.

  Assume the bounds hold for $p\geq0$. Using the even recurrence we have\[D(\mathbf{T};0, 2k, 2^{p+1}) = 2D(\mathbf{T};0, k, 2^p) \geq 2\left(\frac{1}{3}2^p - k\right) = \frac{1}{3}2^{p+1} - 2k\]
  and \[D(\mathbf{T};0, 2k, 2^{p+1}) = 2D(\mathbf{T};0, k, 2^p) \leq 2\left(\frac{2}{3}2^p + k\right) = \frac{2}{3}2^{p+1} + 2k.\]
Using the odd recurrence we have \[D(\mathbf{T};0, 2k+1, 2^{p+1}) = 2^{p+1} - D(\mathbf{T};0, k, 2^p) - D(\mathbf{T};0, k+1, 2^p).\]
It follows that \[D(\mathbf{T};0, 2k+1, 2^{p+1}) \geq 2^{p+1} - \left(\frac{2}{3}2^p + k\right) - \left(\frac{2}{3}2^p + k + 1\right) = \frac{1}{3}2^{p+1} - (2k+1)\]
and \[D(\mathbf{T};0, 2k+1, 2^{p+1}) \leq 2^{p+1} - \left(\frac{1}{3}2^p - k\right) - \left(\frac{1}{3}2^p - (k+1)\right)=\frac{2}{3}2^{p+1} + (2k+1)\]as desired.
\end{proof}

\begin{claim}\label{claim: bound for DmkN}
 For  $m\geq0$, $k\geq1$ and $N\geq1$, we have \[D(\mathbf{T};m, k, N) > \frac{1}{12}N - 2k.\]
\end{claim}
\begin{proof}[Proof of the claim]
If $N \leq 24k$, then $N/12 - 2k \leq 0$, making the bound trivially true. Thus, we may and do assume $N > 24k$.

    Let $p \geq 0$ be the unique integer such that $2^{p+1} \leq N < 2^{p+2}$. Thus the interval $\mathbf{T}[m, m+N-1]$ contains the finite subword  $[c 2^{p}, c 2^{p} + 2^{p} - 1]$ where $c = \lceil {m}/{2^p} \rceil$. We have
    \begin{equation} \label{eq:lower bound for DmkN}D(\mathbf{T};m, k, N) = \sum_{i=m}^{m+N-1} e_i(k) \geq \sum_{j=0}^{2^{p}-1} e_{c 2^{p} + j}(k)\end{equation}For any $x < 2^{p}$, we have 
    \[s_2(c 2^{p} + x) = s_2(c) + s_2(x),\text{ i.e., } T_{c 2^{p} + x} = T_c T_x.\]
   In particular, for $0 \leq j < 2^p - k$ (note that $2^p>k$), we have 
   \begin{equation*}T_{c 2^p + j} = T_c T_j, \quad T_{c 2^p + j + k} = T_c T_{j+k}\end{equation*} 
   It follows that\begin{align*}
    e_{c 2^p + j}(k) = \frac{1 - (T_c T_j)(T_c T_{j+k})}{2} &= \frac{1 - T_c^2 T_j T_{j+k}}{2}\\   &= \frac{1 - T_j T_{j+k}}{2} = e_j(k)
   \end{align*} 
   Therefore \begin{equation}\label{eq: lower bound for sum e_2p-1} \sum_{j=0}^{2^p-1} e_{c 2^p + j}(k) \geq \sum_{j=0}^{2^p-k-1} e_{c 2^p + j}(k) = \sum_{j=0}^{2^p-k-1} e_j(k)\end{equation}
   On the other hand, we have
   \begin{equation}\label{eq: lower bound for sum e_2p-k-1} 
       \sum_{j=0}^{2^p-k-1} e_j(k) = D(\mathbf{T};0, k, 2^p) - \sum_{j=2^p-k}^{2^p-1} e_j(k)\geq D(\mathbf{T};0, k, 2^p) - k
   \end{equation} 
   Using Claim~\ref{claim: bounds for D0k2p} and \eqref{eq:lower bound for DmkN}, \eqref{eq: lower bound for sum e_2p-1}, \eqref{eq: lower bound for sum e_2p-k-1}, we deduce that \[D(\mathbf{T};m, k, N) \geq D(\mathbf{T};0, k, 2^p)-k \geq \left( \frac{1}{3}2^p - k \right) - k = \frac{1}{3}2^p - 2k.\] Since $N<2^{p+2}$, we conclude that \[D(\mathbf{T};m, k, N) > \frac{1}{12}N - 2k.\]
\end{proof}

Now let $\rho<\mathbf{Rdio}(\mathbf{T})$, then $\mathbf{T}$ satisfies Condition $(*)_\rho$. Let $\epsilon \in (0, {1}/{12})$, then there exist sequences $(r_n)$, $(s_n)$, $(t_n)$ and $\delta\geq0$  such that $t_n \geq \rho s_n$ and \[D(\mathbf{T};r_n+1, s_n-r_n, t_n-s_n) \leq \epsilon (t_n-s_n).\] By Claim~\ref{claim: bound for DmkN}, we have
\begin{align*}
    \frac{1}{12}(t_n-s_n) - 2(s_n-r_n) < D(\mathbf{T};r_n+1, s_n-r_n, t_n-s_n) \leq \epsilon (t_n-s_n), 
\end{align*}
so \[\left(\frac{1}{12} - \epsilon\right) (t_n-s_n) <  2(s_n-r_n).\]
This yields
\[ \left(\frac{1}{12} - \epsilon\right)(\rho - 1)(s_n-r_n-1) < 2(s_n-r_n). \]
Let $n$ tend to infinity, we obtain
\[ \left(\frac{1}{12} - \epsilon\right)(\rho - 1) \leq 2. \]
It follows that
\[ \rho \leq 1 + \frac{2}{\frac{1}{12} - \epsilon}. \]
Let $\epsilon$ tend to $0$, we have $\rho\leq25$ as desired. 
\end{proof}

\begin{remark}
    Note that with $m=0$ and $k=1$, the proof in fact yields that\[C(\mathbf{T}; 0, 1, N)  \asymp \frac{N}{3},\]
    so the Thue--Morse word $\mathbf{T}$ does not satisfy the condition of Theorem~\ref{theorem: existence of word}. That partly explains the strength of this theorem.
\end{remark}

Next, we turn to the Rudin--Shapiro word. 
Again for convenience, we consider the alphabet $\{1, -1\}$. The Rudin--Shapiro sequence $\mathbf{R} = R_0 R_1 R_2 \cdots$ over $\{1, -1\}$ is defined by $R_n = (-1)^{r(n)}$, where $r(n)$ is the number of occurrences of the block $11$ in the binary expansion of $n$. In this situation, we have $R_{2i} = R_i$ and $R_{2i+1} = (-1)^i R_i$. Applying the strategy used in the proof of Theorem~\ref{theorem: Thue Morse}, we obtain the following bound. 

\begin{theorem}\label{theorem: Rudin Shapiro}
The refined Diophantine exponent of the Rudin--Shapiro sequence is at most $25$.
\end{theorem}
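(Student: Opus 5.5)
The plan is to follow the scheme of the proof of Theorem~\ref{theorem: Thue Morse}. The goal is a linear lower bound on the mismatch count of the form
\[D(\mathbf{R};m,k,N) > \frac{1}{12}N - 2k \qquad (m\geq 0,\ k\geq1,\ N\geq1).\]
Once we have it, the final paragraph of the proof of Theorem~\ref{theorem: Thue Morse} applies verbatim: take $\epsilon<1/12$ and plug in $m=r_n+1$, $k=s_n-r_n$, $N=t_n-s_n$. This gives $(\tfrac1{12}-\epsilon)(\rho-1)\leq 2$, hence $\rho\leq 25$. So the whole work is to establish this bound in two steps, in the same order as in the Thue--Morse case.

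\emph{Step 1 (prefix blocks).} Set $e_i(k)=(1-R_iR_{i+k})/2$ and aim for an analogue of Claim~\ref{claim: bounds for D0k2p}, namely
\[\tfrac13 2^p-k\leq D(\mathbf{R};0,k,2^p)\leq \tfrac23 2^p+k.\]
Unlike the Thue--Morse case, the relations $R_{2i}=R_i$ and $R_{2i+1}=(-1)^iR_i$ do not close up on $D$ alone. For even shifts one gets $e_{2i}(2k)=e_i(k)$, and $e_{2i+1}(2k)$ equals $e_i(k)$ or $1-e_i(k)$ according to the parity of $k$. For odd shifts the factor $(-1)^i$ survives: $R_{2i}R_{2i+2k+1}=(-1)^{i+k}R_iR_{i+k}$ and $R_{2i+1}R_{2i+2k+2}=(-1)^iR_iR_{i+k+1}$. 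I would therefore enlarge the induction to a finite vector of quantities: the correlation sums of $\mathbf{R}$ and of the twisted word $((-1)^iR_i)_i$, with both shifts $k$ and $k+1$. Equivalently, one can use the Golay--Shapiro pair $P_{p+1}=P_pQ_p$, $Q_{p+1}=P_p(-Q_p)$, where $\mathbf{R}[0,2^p-1]=P_p$. In either formulation I would write a linear recursion from level $p$ to level $p+1$ and prove by induction that each of these quantities lies in $[\tfrac13 2^p-k,\ \tfrac23 2^p+k]$. The additive error is $k$ because, as in the Thue--Morse case, each step doubles the previous error and adds at most $1$ from boundary terms. The Golay complementarity (aperiodic autocorrelations of $P_p$ and $Q_p$ cancel for nonzero shifts) is the tool I would try first to control the signed sums.

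\emph{Step 2 (arbitrary windows).} Given $N>24k$, pick $p$ with $2^{p+1}\leq N<2^{p+2}$ and an aligned block $[c2^p,c2^p+2^p-1]\subseteq[m,m+N-1]$. The identity $T_{c2^p+x}=T_cT_x$ must be replaced: for Rudin--Shapiro, $r(c2^p+x)=r(c)+r(x)+[\,c\text{ odd and }x\geq 2^{p-1}\,]$. Hence $R_{c2^p+x}=R_cR_x$ when $c$ is even. When $c$ is odd, the same holds up to a sign that is constant on each half $[0,2^{p-1})$ and $[2^{p-1},2^p)$. Consequently $e_{c2^p+j}(k)=e_j(k)$ for all $j$ except those with $j<2^{p-1}\leq j+k$, which is at most $k$ indices. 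If $c$ is odd I would instead use $R_{c2^p+x}=\pm R_cR'_x$ for the twisted word, whose block correlations are among the quantities controlled in Step~1. Altogether this gives
\[D(\mathbf{R};m,k,N)\geq D(\cdot\,;0,k,2^p)-2k\geq \tfrac13 2^p-3k.\]
This bound has $3k$ rather than the needed $2k$. To recover the constant $2$ (and hence exactly $25$), I would sharpen Step~1 so that the sign-change boundary is already absorbed into its $-k$ error term. If that fails, the argument still yields a finite bound of the form $1+3\cdot 12$.

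The main obstacle is Step~1. Because of the $(-1)^i$ twist, the odd-shift recurrence produces signed sums that are not mismatch counts, so the clean induction of Claim~\ref{claim: bounds for D0k2p} has to be run on a system of coupled quantities. One must then check that the constants $1/3$, $2/3$ and error $k$ are preserved by every branch of that system.
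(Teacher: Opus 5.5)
Your architecture is right: dyadic prefix blocks, then an aligned block inside an arbitrary window, then the closing paragraph of the Thue--Morse proof. As written, however, it does not reach $25$. By your own count, Step~2 gives $D(\mathbf{R};m,k,N)>\frac{N}{12}-3k$, which yields only $\mathbf{Rdio}(\mathbf{R})\leq 37$, and the proposed ``sharpening of Step~1'' is never specified.

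Step~1 itself is also not established. Its induction hypothesis, taken literally, is false: signed correlation sums cannot lie in $[\frac13 2^p-k,\frac23 2^p+k]$, since they are $O(k)$. The auxiliary quantity forced by your odd-shift identities is the cross sum
\[V(k,p)=\sum_{i=0}^{2^p-1}(-1)^iR_iR_{i+k}.\]
It is not the autocorrelation of $((-1)^iR_i)_i$, which is just $(-1)^k$ times that of $\mathbf{R}$ and adds nothing.

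The missing idea, which is the paper's key claim, is a recursion for signed sums. Set $U(k,p)=\sum_{i=0}^{2^p-1}R_iR_{i+k}$. Then the pair $(U,V)$ satisfies the closed recursion
\begin{align*}
U(2l,p+1)&=(1+(-1)^l)\,U(l,p), & V(2l,p+1)&=(1-(-1)^l)\,U(l,p),\\
U(2l+1,p+1)&=(-1)^lV(l,p)+V(l+1,p), & V(2l+1,p+1)&=(-1)^lV(l,p)-V(l+1,p).
\end{align*}
Induction on $p$ then gives $|U(k,p)|,|V(k,p)|\leq 2k$, uniformly in $p$. For $l=0$, use $V(0,p)=0$ when $p\geq1$, and check $p=0$ directly. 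Hence
\[D(\mathbf{R};0,k,2^p)=2^{p-1}-\frac{U(k,p)}{2}\geq 2^{p-1}-k,\]
with main constant $\frac12$ instead of your $\frac13$.

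That slack is exactly what absorbs the extra $k$ from your Step~2 observation that $R_{c2^p+x}=R_cR_x$ fails for odd $c$. This observation is correct, and more careful than the paper's ``similarly''. You then get
\[D(\mathbf{R};m,k,N)\geq 2^{p-1}-3k>\frac{N}{8}-3k,\]
and this is $\geq \frac{N}{12}-2k$ as soon as $N\geq 24k$. Either form closes the final paragraph with $\rho\leq 1+24=25$.

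Alternatively, you can keep your weaker two-sided bound. For odd $c$, the aligned block equals $\pm Q_p$ with $Q_p=\mathbf{R}[2^p,2^{p+1}-1]$. Golay complementarity $C_{Q_p}(k)=-C_{P_p}(k)$ then already gives $\frac13 2^p-2k$ mismatches there, and hence $\frac{N}{12}-2k$.
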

\begin{proof}
To keep the paper reasonably short, we only sketch the proof.
As in the proof of Theorem~\ref{theorem: Thue Morse}, for $m \geq 0$, $k \geq 1$ and $N \geq 1$, the number of mismatches between $\mathbf{R}[m,m+N-1]$ and $\mathbf{R}[m+k,m+N-1+k]$ is
\[D(\mathbf{R};m,k,N) = \sum_{i=m}^{m+N-1} \frac{1 - R_i R_{i+k}}{2}.\]
To analyze $D(\mathbf{R};m,k,N)$, we define 
\[ U(k, p) = \sum_{i=0}^{2^p-1} R_i R_{i+k}, \quad V(k, p) = \sum_{i=0}^{2^p-1} (-1)^i R_i R_{i+k}. \]

\begin{claim}\label{claim: RS correlation bound}
    For $k \geq 1$ and $p \geq 0$, we have $|U(k, p)| \leq 2k$ and $|V(k, p)| \leq 2k$.
\end{claim}
\begin{proof}[Proof of the claim]
This follows from induction on $p$. We omit the details.
\end{proof}

As a consequence, noting that $D(\mathbf{R};0, k, 2^p) = 2^{p-1} - U(k,p)/2$, we obtain a key inequality $D(\mathbf{R};0, k, 2^p) \geq 2^{p-1} - k$.

\begin{claim}\label{claim: RS DmkN bound}
For $m \geq 0$, $k \geq 1$ and $N \geq 1$, we have \[D(\mathbf{R};m, k, N) > \frac{1}{12}N - 2k.\]
\end{claim}
\begin{proof}[Proof of the claim]
This can be proved similarly to Claim~\ref{claim: bound for DmkN}. We omit the details.
\end{proof}

As at the end of the proof of Theorem~\ref{theorem: Thue Morse}, we obtain $\mathbf{Rdio}(\mathbf{R}) \leq 25$. 
\end{proof}

\begin{remark}\label{remark: Champernowne Rudin--Shapiro}
 It is well known that $\mathbf{Dio}(\mathbf{R})\leq 4$, as the critical exponent of $\mathbf{R}$ is bounded above by 4; see e.g. \cite{Allouche-Melou-1994}.
\end{remark}

\section{Coding rotations by intervals}\label{section: coding rotations}

In this section, we show that the refined Diophantine exponent of a word arising from the \emph{coding of an irrational rotation by intervals} is always infinite, unless in the trivial case.

Let $\theta\in[0,1)$ be irrational, and let $x \in \mathbb{R}$. Let $\Sigma$ be a finite alphabet. Let $c\colon [0, 1) \to \Sigma$ be a \emph{piecewise-constant function} mapping to $\Sigma$, with \emph{partition boundaries} $0=\gamma_0<\gamma_1<\cdots<\gamma_d=1$. In this section, we consider the infinite word $\mathbf{a}=a_0a_1\cdots$ defined by $a_i = c(\{i\theta + x\})$, which represents the coding of the rotation $\theta$ by intervals $[\gamma_{j-1},\gamma_j)$. To establish meaningful dynamical properties, we assume that $c$ is non-constant. 

\begin{remark}
    By mimicking the argument in \cite[Remark~2.22]{Khai-2026}, one can show that the word $\mathbf{a}$ has sublinear complexity. It is also clear that $\mathbf{a}$ is non-eventually periodic.
\end{remark}

\begin{theorem}\label{Theorem: coding rotations}
For such an infinite word $\mathbf{a}$, we have $\mathbf{Rdio}(\mathbf{a})=\infty$.
\end{theorem}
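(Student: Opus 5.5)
Fix $\rho>1$ and $\epsilon>0$. We will build the sequences $(r_n),(s_n),(t_n)$ and a bound $\delta$ required by Condition $(*)_\rho$ of Definition~\ref{Definition: refined Dio}. Since $\rho$ is arbitrary, this gives $\mathbf{Rdio}(\mathbf{a})=\infty$. The natural shift is a continued fraction denominator. Let $p_n/q_n$ be the convergents of $\theta$ and take $r_n=-1$, $s_n=q_n-1$, so that $s_n-r_n=q_n\to\infty$. We compare $a_i=c(\{i\theta+x\})$ with $a_{i+q_n}=c(\{i\theta+x+q_n\theta\})$. Write $\eta_n=q_n\theta-p_n$, so $|\eta_n|=\lVert q_n\theta\rVert<1/q_{n+1}$. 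A mismatch at position $i$ can occur only if the short arc between $\{i\theta+x\}$ and $\{i\theta+x\}+\eta_n$ (mod $1$) contains one of the boundary points $\gamma_0,\dots,\gamma_{d-1}$. So every mismatch lies in the union of the $d$ sets $M_j=\{i:\ \{i\theta+x\}\in J_j\}$, where $J_j$ is the arc of length $|\eta_n|$ ending (or starting) at $\gamma_j$.

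The key step is to control how the indices in each $M_j$ are arranged inside a window $[0,L)$ with $L=t_n-s_n$. We will choose $L=\lfloor \epsilon' q_{n+1}/ d\rfloor$ or so; the precise choice is settled below. By the law of best approximation (Proposition~\ref{proposition: law of best approx}), the points $\{i\theta+x\}$ for $i$ in an interval of length $q_{n+1}$ are $1/(2q_{n+1})$-separated up to constants; more precisely, two indices $i<i'$ with $i'-i<q_{n+1}$ satisfy $\lVert (i'-i)\theta\rVert\ge \lVert q_n\theta\rVert=|\eta_n|$. The bound holds with equality only when $i'-i=q_n$. Hence an arc of length $|\eta_n|$ contains at most about two points of the orbit segment of length $q_{n+1}$. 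Then each $M_j\cap[0,q_{n+1})$ has $O(1)$ elements, and the total number of mismatches in $[0,\min(L,q_{n+1}))$ is at most some $C d$ with $C$ absolute. This gives $(\epsilon,\delta)$-closeness with $\delta=Cd$, using singleton intervals, as soon as $L\ge Cd/\epsilon$.

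It remains to make $t_n/s_n$ large. With $L\approx q_{n+1}$ we get $t_n=s_n+L\approx q_n+q_{n+1}$. This forces $t_n/s_n\to\infty$ only if $q_{n+1}/q_n$ is unbounded, which is exactly the well-approximable case. This is the main obstacle: for badly approximable $\theta$, one denominator does not suffice. I would first try to go beyond $q_{n+1}$. Over a window of length $L=Kq_{n+1}$, each $M_j$ is contained in a union of $O(K)$ arithmetic-like clusters. The points of $\{i\theta+x\}$ falling in an arc of length $|\eta_n|\approx 1/q_{n+1}$ recur with gaps at least about $q_{n+1}$. More precisely, the return times of a rotation to an interval of length $|\eta_n|$ have at most three gap lengths, all $\gg q_{n+1}$ by the three-distance theorem/best approximation. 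So the mismatch count in $[0,L)$ is $O(dL|\eta_n|+d)=O(dK)$, while $L=Kq_{n+1}$. A count $O(dK)$ of single-point intervals against length $Kq_{n+1}$ is a vanishing proportion, but $\delta$ must be bounded independently of $n$. So I would instead group the mismatches: they lie in the orbit segment at indices $i$ with $\{i\theta+x\}$ within $|\eta_n|$ of some $\gamma_j$. Along $i$, these indices drift by $\approx q_{n}$-steps? No. The better grouping is the following. The mismatch set is $\{i:\{i\theta+x\}\in J_j\}$, and by choosing the shift $k=q_m$ with $m$ much smaller than the window scale, the arcs $J_j$ have length $\approx1/q_{m+1}$. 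Within a window of length $L$, the indices with $\{i\theta+x\}$ in $J_j$ then form a set of density $\approx 1/q_{m+1}$. That is small proportion but not few intervals.

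So the plan in the badly approximable case is the following. Take $s_n-r_n=k=q_m$ and window length $L=Kq_m$ with $K$ large but fixed in terms of $\rho$. The condition $t_n\ge\rho s_n$ is arranged by $r_n=-1$, $s_n=q_m-1$, and $t_n=s_n+(\rho-1)q_m$. The mismatch proportion is then at most $2d|\eta_m|\cdot$(bounded) $\le 2d/q_{m+1}\cdot$, giving $O(d\rho)$ mismatches in total, since $L|\eta_m|\le(\rho-1)q_m/q_{m+1}\le\rho$. Thus $\delta=O(d\rho)$ singleton intervals work, uniformly in $m$, with covered proportion $O(d\rho/q_m)\to0<\epsilon$. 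This argument actually handles every irrational $\theta$ at once: bounded $\delta$ depending only on $d$ and $\rho$. The step to verify carefully, and the one I expect to be the real obstacle, is the counting bound $|\{0\le i<L:\{i\theta+x\}\in J\}|\le L|J|+O(1)$ for an arc $J$ of length $|\eta_m|$ and $L\le\rho q_m$. I would prove it via best approximation: consecutive hits differ by some $h$ with $\lVert h\theta\rVert<|\eta_m|$, so $h\ge q_{m+1}$ or $h$ is a multiple-type return of size about $q_m$. Either way, hits in $[0,\rho q_m)$ number at most $\rho+2$.
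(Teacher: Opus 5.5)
Your final plan is correct and is essentially the paper's proof. It takes $s_n-r_n=q_m$ and a window $t_n-s_n\approx(\rho-1)q_m$, and uses the law of best approximation to show that each boundary arc of length $\lVert q_m\theta\rVert$ is hit at most about $\rho+2$ times, so $\delta=O(d\rho)$ singleton intervals suffice. The paper phrases the count as ``at most one hit per arc in each block of length $q_n$'', using the minimal gap $\lVert q_{n-1}\theta\rVert>\lVert q_n\theta\rVert$.

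The detour in your middle paragraphs was unnecessary. Since $\rho$ is fixed, you never need $t_n/s_n\to\infty$. Also, if you take the arcs half-open, $\lVert h\theta\rVert<\lVert q_m\theta\rVert$ already forces $h\ge q_{m+1}$, so the ``multiple-type return'' alternative never arises.
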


\begin{proof}
We fix any $\rho > 1$. We will construct sequences $(r_n)$, $(s_n)$, $(t_n)$ and an integer $\delta$ independent of $n$. Let $({p_n}/{q_n})_{n \geq 0}$ be the continued fraction convergents of $\theta$.  We set $r_n = -1$, $s_n = q_n - 1$, $t_n = q_n - 1 + \lceil \rho q_n \rceil$, and $\delta = d\lceil \rho + 1 \rceil$. It is clear that $t_n \geq \rho s_n$. 

If an index $i$ is a mismatch with respect to $s_n-r_n=q_n$ (meaning $a_i \neq a_{i+q_n}$), then the points $\{i\theta + x\}$ and $\{i\theta + x + q_n\theta\}$ must fall into different intervals of the partition. Consequently, the distance from $\{i\theta + x\}$ to some $\gamma_j$ must be at most $\|q_n\theta\| = |q_n\theta - p_n|$. For each $1 \leq j \leq d$, let $I_j^{(n)} \subset [0, 1)$ be the interval of length $|q_n\theta - p_n|$ adjacent to $\gamma_j$. The necessary condition for a mismatch requires that $\{i\theta + x\} \in \bigcup_{j=1}^d I_j^{(n)}$.

\begin{claim}\label{claim: mismatches coding rotation}
    The exact number of mismatches in any subword of length $q_n$ is bounded above by $d$. That is, for any integer $m \geq 0$, we have
    \[|\{m\leq i < m+q_n : a_{i}\neq a_{i+q_n}\}| \leq d.\]
\end{claim}
\begin{proof}[Proof of the claim]
By the law of best approximation, the minimum distance between any two points in an orbit of length $q_n$ is
\[\min_{0 < |i-j| < q_n} \|(i\theta + x) - (j\theta + x)\| = \min_{0 < |i-j| < q_n} \|(i-j)\theta\| = |q_{n-1}\theta-p_{n-1}|.\]
Because $|q_n\theta - p_n| < |q_{n-1}\theta-p_{n-1}|$, we deduce that the intersection
\[ \{ {i\theta + x} : m \leq i < m+q_n \} \cap \bigcup_{j=1}^d I_j^{(n)}\]
contains at most $1$ point per interval $I_j^{(n)}$. Thus, the desired number of mismatches is bounded above by $d$.
\end{proof}

Now, by covering the block $[0, t_n-s_n-1]$ with $\left\lceil ({t_n-s_n})/{q_n} \right\rceil$ adjacent intervals of length at most  $q_n$ and applying the claim, the total number of mismatches is at most 
\[ d\left\lceil \frac{t_n-s_n}{q_n} \right\rceil  = d\left\lceil \frac{\lceil \rho q_n \rceil}{q_n} \right\rceil  \leq d\lceil \rho + 1 \rceil = \delta. \]
It follows that condition $(*)_\rho$ holds for any $\rho>1$, so $\mathbf{Rdio}(\mathbf{a}) = \infty$.
\end{proof}

To conclude the section, we determine the Diophantine exponent of words coming from the coding of a rotation by intervals.

\begin{theorem}\label{theorem: Dio coding rotation}
   Let $\mathbf{a}=a_0a_1\cdots$ be the word considered above. We have the following dichotomy.
   \begin{itemize}
       \item If $\theta$ is badly approximable, then $ 1 < \mathbf{Dio}(\mathbf{a}) < \infty$.
       \item If $\theta$ is well approximable, then $ \mathbf{Dio}(\mathbf{a}) = \infty$.
   \end{itemize}
\end{theorem}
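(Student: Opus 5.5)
We split the proof into three parts: the lower bound $\mathbf{Dio}(\mathbf{a})>1$ (always), the upper bound $\mathbf{Dio}(\mathbf{a})<\infty$ for badly approximable $\theta$, and $\mathbf{Dio}(\mathbf{a})=\infty$ for well approximable $\theta$.

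\emph{Lower bound.} Since $\mathbf{a}$ has sublinear complexity and is not eventually periodic (Remark above), the first row of Table~\ref{table:examples} (\cite{Adamczewski-Bugeaud-2011}) already gives $\mathbf{Dio}(\mathbf{a})>1$. A direct argument, to keep things self-contained, runs as follows. Take $r_n=-1$ and $s_n=q_n-1$. A mismatch $a_i\neq a_{i+q_n}$ requires $\{i\theta+x\}$ to lie within $\|q_n\theta\|<1/q_{n+1}$ of some $\gamma_j$. By the three-distance/gap structure, the points $\{i\theta+x\}$ with $0\le i<q_{n+1}$ are $\|q_n\theta\|$-separated. The key observation is that each interval $I_j^{(n)}$ is hit by the orbit with return times bounded below by a constant multiple of $q_{n+1}$. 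Hence among $i<cq_{n+1}$ the first hit of $\bigcup_j I_j^{(n)}$ occurs, for a suitable starting window, at index $\gg q_n$. To make this uniform I would instead use shift invariance (Proposition~\ref{proposition: Rdio is shift invariant} holds for $\mathbf{Dio}$ by \cite[Proposition~6.4]{Peltomaki-2024}) and choose $r_n$ just after a hit, so that the next hit is at distance $\ge \|q_{n-1}\theta\|/\|q_n\theta\|$-many steps of size $q_n$, up to $d$ boundaries. The cleanest route is the pigeonhole principle: the $d$ forbidden intervals split $[0,q_{n+1})$ so that some gap of mismatch-free length $\ge q_{n+1}/(d+1)-q_n$ exists, and it starts at some $r_n\le q_{n+1}$. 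With $s_n=r_n+q_n$, this gives $t_n/s_n\ge 1+c/(d+1)$ only when $q_{n+1}\asymp q_n$. So for general $\theta$ I will fall back on \cite{Adamczewski-Bugeaud-2011}.

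\emph{Well approximable case.} Given $\rho$, pick $n$ with $w_{n+1}=q_{n+1}/q_n$ roughly large. Take $r_n=-1$ and $s_n=q_n-1$. Mismatches with shift $q_n$ occur only at $i$ with $\{i\theta+x\}$ in $\bigcup_j I_j^{(n)}$, and consecutive visits of a single $I_j^{(n)}$ are at least $q_{n+1}$ apart up to a bounded number of exceptions. This is the same separation argument as in Claim~\ref{claim: mismatches coding rotation}, applied to the longer orbit $i<q_{n+1}$, where the gaps are $\ge\|q_n\theta\|$, together with the equality case of the law of best approximation. Using shift invariance, we may choose $x$ (that is, restart the word at a suitable index $m\le q_{n+1}$, hence at the cost of making $s_n$ up to $q_{n+1}+q_n$) so that no boundary visit occurs in a window of length $\gg q_{n+1}/d$. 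To avoid the large offset, I would instead exploit that the first visit after $0$ occurs at $i\ge$ some index. The honest approach: for each $j$, the visits of $I_j^{(n)}$ among $i<q_{n+1}+q_n$ number at most $2$, so among $i\in[0,q_{n+1})$ there are at most $2d$ mismatches. Pigeonhole then gives a mismatch-free run $[m,m+L)$ with $L\ge q_{n+1}/(2d+1)$ and $m\le q_{n+1}$. This produces a repetition $UV^w$ with $|V|=q_n$, $|U|=m$, $|V^{w}|\approx L$. The ratio $|UV^w|/|UV|$ is bounded below but not large when $m\approx q_{n+1}$. \textbf{This is the main obstacle.} To resolve it, I would use $\mathbf{Dio}$ shift invariance through a diagonal choice: the offset $m$ depends on $n$, so shift invariance does not directly apply. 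Instead, I would use that a mismatch-free run of length $L\gg q_n$ starting at $m$ with period $q_n$ means the factor $\mathbf{a}[m,m+L)$ has period $q_n$. Since every factor of length about $q_n+q_{n-1}$ of $\mathbf{a}$ occurs in every window of length $\ll q_{n+1}$, by uniform recurrence with linear constant along the $q$'s, I would locate a copy near position $O(q_n)$. For large partial quotients, recurrence is controlled by $q_n$-scale by the three-gap theorem, with the boundaries fixed; this relocation step is the delicate point.

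\emph{Badly approximable case.} Here $q_{n+1}\le Aq_n$. Any repetition with period $|V|=k$ and length $L\ge 2k$ forces every $i$ in the run to avoid the $\|k\theta\|$-neighbourhoods of the $\gamma_j$. Since $\|k\theta\|\ge\kappa/k$ and the orbit of length $\gg k/\kappa$ is $1/k$-dense, we get $L\ll k$, and hence $\mathbf{Dio}(\mathbf{a})\le 1+O(1/\kappa)<\infty$.
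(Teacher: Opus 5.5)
Your lower bound is the same as the paper's: both invoke sublinear complexity and \cite{Adamczewski-Bugeaud-2011} (Proposition~9.1 there). In the well approximable case you also obtain the key count the paper uses: at most $2d$ indices $i\in[0,q_{n+1})$ with $a_i\neq a_{i+q_n}$. The genuine gap is the step after that, which you yourself flag as the main obstacle. An \emph{arithmetic} pigeonhole only gives a mismatch-free run of length $\asymp q_{n+1}/d$ whose offset may itself be $\asymp q_{n+1}$, so the ratio $|UV^w|/|UV|$ stays bounded.

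Your proposed relocation does not fix this. Recurrence at scale $q_n$ concerns factors of length about $q_n$, whereas the factor you must move has length $L\gg q_n$. A single mismatch at an index in $[Cq_n,L)$ already rules out every copy of it starting before $Cq_n$.

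No relocation is needed; the missing idea is a \emph{geometric} pigeonhole. Put $M=(q_{n+1}/q_n)^{1/(2d+1)}$ and split $[q_n,q_{n+1})$ into the $2d+1$ blocks $[q_nM^j,q_nM^{j+1})$, $0\le j\le 2d$. One block, with integer points $A\le i<B$, contains no mismatch. Take $r=A-1$, $s=A-1+q_n$, $t=B+q_n-1$. Then the two words in Definition~\ref{Definition: Dio}(ii) coincide, $s-r=q_n\to\infty$, and $t/s\ge q_nM^{j+1}/(2q_nM^j)=M/2$. Since $\limsup q_{n+1}/q_n=\infty$, $M$ is unbounded along a subsequence, so $\mathbf{Dio}(\mathbf{a})=\infty$. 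The paper does exactly this in telescoped form: with mismatch positions $i_1<\dots<i_r$, $r\le 2d$, and $M_n=\max_j i_{j+1}/(i_j+1+q_n)$, iterating $i_{j+1}\le M_n(i_j+2q_n)$ gives $q_{n+1}\le (2M_n)^{2d+1}q_n$.

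Your badly approximable sketch matches the paper's Case~2, but it is stated too broadly. A run of period $k$ is forced to avoid only a one-sided interval of length $\|k\theta\|$, at a boundary where $c$ actually changes value. This holds only when $\|k\theta\|<\Delta=\min_j(\gamma_j-\gamma_{j-1})$. If $\|k\theta\|\ge\Delta$, the point $y+k\theta$ may jump over several intervals and land on the same letter. Then no such neighbourhood need lie in the mismatch set $E_k=\{y: c(y)\neq c(y+k\theta)\}$.

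That regime needs a separate argument. The paper's Case~1 bounds $\mu(E_k)$ from below and then uses equidistribution. Even there care is needed: if $c$ is invariant under a rational rotation, e.g.\ $c(y+\tfrac12)=c(y)$, then $\mu(E_k)\to 0$ as $\{k\theta\}\to\tfrac12$. In that case one should first rewrite $\mathbf{a}$ as a coding of the rotation by $2\theta$, which is still badly approximable.
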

\begin{proof}
\begin{itemize}
    \item When $\theta$ is badly approximable, since $\mathbf{a}$ has sublinear complexity, the lower bound $  \mathbf{Dio}(\mathbf{a})>1$ follows from \cite[Proposition~9.1]{Adamczewski-Bugeaud-2011}. 

For the finiteness of $\mathbf{Dio}(\mathbf{a})$, we follow the idea of \cite{Adamczewski-Bugeaud-2011,Mignosi-1989}. We note that $\mathbf{Dio}(\mathbf{a})$ is bounded above by the \emph{index} of $\mathbf{a}$, which is defined as the supremum of $w\geq1$ such that there exists a non-empty finite word $V$  such that $V^w$ is a finite subword of $\mathbf{a}$. It remains to show the finiteness of such an index. 

To this end, assume the contradiction that the index of $\mathbf{a}$ is infinite. Let $w>1$ be a large real number to be chosen later, and $V^w$ be a finite subword starting at index $k \geq 0$ with period $q = |V| \geq 1$. Thus for $k\leq i\leq k+\lfloor (w-1)q  \rfloor-1$, we have $a_i = a_{i+q}$, i.e., $c(\{i\theta + x\}) = c(\{i\theta+q\theta + x\})$. 

To detect the mismatches, we need the quantity \[\Delta = \min_{1 \leq j \leq d} (\gamma_j - \gamma_{j-1}) > 0.\] We have two cases.

\textbf{Case 1}: $\Vert{}q\theta\Vert{} \geq \Delta$, so the fractional part $\{q\theta\}\in[\Delta, 1-\Delta]$. Let $E_q = \{y \in [0,1) : c(y) \neq c(y+q\theta)\}$. We consider a function $f\colon [0,1)\to\mathbb{R}$ defined by $f(\alpha) = \mu(\{y \in [0,1) : c(y) \neq c(y+\alpha)\})$ where $\mu$ denotes the Lebesgue measure. Because $c$ is a non-constant piecewise-constant function, we have $f(\alpha) > 0$ for all $\alpha \in (0,1)$. It follows that \[\mu(E_q) = f(\{q\theta\}) \geq \min_{\alpha \in [\Delta, 1-\Delta]} f(\alpha) =: \nu > 0.\]
Note that the set $E_q$ is a finite union of intervals whose endpoints belong to $\Gamma \cup (\Gamma - q\theta \pmod 1)$. In particular, $E_q$ consists of at most $2d$ intervals, so there exists at least one interval $I \subset E_q$ such that \[|I| \geq \frac{\mu(E_q)}{2d} \geq \frac{\nu}{2d}.\]

We consider the sequence of $\lfloor (w-1)q \rfloor$ points $x_i = i\theta + x \pmod 1$ for $k \leq i < k+\lfloor (w-1)q \rfloor$. This sequence must avoid $E_q$. For every $L\geq1$, let $D_L$ denote the one-dimensional discrepancy 
\[D_L = \sup_{J \subset [0,1)} \left| \frac{|\{k \leq i < k+L :  \{i\theta + x \}  \in J\}|}{L} - |J| \right|. \]
Then \[D_{\lfloor (w-1)q \rfloor}\geq  |I| \geq \frac{\nu}{2d}.\]
Because $\theta$ is an irrational rotation, the sequence $(i\theta+x)_{i\geq0}$ is uniformly distributed in $[0,1)$. Therefore, there exists some integer $M > 0$ (depending only on $\theta$ and ${\nu}/{2d}$) such that $D_L<{\nu}/{2d}$
for all $L\geq M$. Thus, $\lfloor (w-1)q \rfloor < M$. It follows that \[ w < 1 + \frac{M+1}{q} \leq M+2,\]
which is absurd if we choose that $w>M+2$.

\textbf{Case 2}: $\|q\theta\| < \Delta.$ It follows that there exists an interval $I \subset  [0,1)$ of length $\|q\theta\|$ such that if $y \in I$, then $c(y)\neq c(y+q\theta)$. Therefore,  the points $\{i\theta+x\}$, for $k \leq i \leq k+\lfloor (w-1)q \rfloor - 1$, avoid $I$. 

Let $q_n$ be the largest continued fraction denominator of $\theta$ such that $q_n \leq \lfloor (w-1)q \rfloor$. The maximum gap between points in these $\lfloor (w-1)q \rfloor$ points is bounded above by $\Vert{}q_{n-1}\theta\Vert{}$. Therefore, for the sequence to avoid $I$, we must have $\Vert{}q\theta\Vert{} < {1}/{q_n}$ (since $\Vert{}q_{n-1}\theta\Vert{} < {1}/{q_n}$). Recall that $\theta$ is badly approximable, so there exist $A>0$ such that $q_{n+1} \leq A q_n$, and $\kappa>0$ such that $\Vert{}q\theta\Vert{}\geq{\kappa}/{q}$. It follows that
\[\Vert{}q\theta\Vert{} < \frac{1}{q_n} \leq \frac{A}{q_{n+1}} < \frac{A}{\lfloor (w-1)q \rfloor}.\]
This yields $\lfloor (w-1)q \rfloor < {A}/{\Vert{}q\theta\Vert{}} \leq {A}q/{\kappa} ,$ so
\[w < 1 + \frac{A}{\kappa} + \frac{1}{q} \leq 2 + \frac{A}{\kappa},\]
which is a contradiction if we choose that $w\geq 2 + {A}/{\kappa}$.
  \item When $\theta$ is well approximable, let $({p_n}/{q_n})_{n\geq1}$ be the continued fraction convergents of $\theta$. We will use $q_n$ as periods to construct data for $\mathbf{Dio}(\mathbf{a})$ as in the proof of Theorem~\ref{Theorem: coding rotations}.  For now, we fix $n$. Similar to Claim~\ref{claim: mismatches coding rotation}, we have the following.

\begin{claim}\label{claim: mismatches well approximable}
    The exact number of mismatches in any subword of length $q_{n+1}$ is bounded above by $2d$. That is, for any integer $m \geq 0$, we have
    \[|\{m\leq i < m+q_{n+1} : a_{i}\neq a_{i+q_n}\}| \leq 2d.\]
\end{claim}

It follows that there are $0 = i_0 \leq i_1 < i_2 < \dots < i_r < i_{r+1} = q_{n+1}$, where $0 \leq r \leq 2d$, such that the mismatches $a_i \neq a_{i+q_n}$ within the interval $[0, q_{n+1}-1]$ occur exactly at the indices $i \in \{i_1, \dots, i_r\}$.
  
For each $0 \leq j \leq r$, we consider the prefix $U_j = \mathbf{a}[0, i_j]$ and the periodic finite subword $V_j = \mathbf{a}[i_j+1, i_j+q_n]$. Then $U_j V_j^w$ is a prefix of $\mathbf{a}$ with $w \geq 1$ such that its total length is $|U_j V_j^w| = i_{j+1}$. We have 
\[ \frac{|U_j V_j^w|}{|U_j V_j|} = \frac{i_{j+1}}{i_j + 1 + q_n}. \]
We set \[M_n = \max_{0 \leq j \leq r} \frac{|U_j V_j^w|}{|U_j V_j|}.\] This gives 
\[ i_{j+1} \leq M_n(i_j + 1 + q_n) \leq M_n(i_j + 2q_n). \]
By induction, for any $0\leq j \leq r+1$, we have $i_j \leq (2M_n)^j q_n$. It follows that
\[ 2M_n \geq \left( \frac{q_{n+1}}{q_n} \right)^{\frac{1}{2d+1}}. \]
Because $\theta$ is well approximable, we have $\limsup_{n \to \infty} {q_{n+1}}/{q_n} = \infty$. This implies that $M_n$ must also tend to infinity as $n \to \infty$. Therefore, $\mathbf{Dio}(\mathbf{a}) = \infty$.
\end{itemize}    
\end{proof}

\section{Bracket words}\label{section: bracket}
We study bracket words in this section. We start by recalling this notion, following \cite{Adamczewski-Konieczny-2023}.

\begin{definition}
The family of \emph{generalized polynomials} is the smallest set of functions $\mathbb{Z} \to \mathbb{R}$ containing the polynomial maps and closed under addition, multiplication, and the operation of taking the integer part $\lfloor \cdot \rfloor$. 
\end{definition}

\begin{definition}
A \emph{bracket word} over a finite alphabet $\Sigma$ is an infinite word $\mathbf{a} = a_0a_1\cdots$ of the form $a_n = a(g(n))$, where $g\colon \mathbb{N}_0 \to \mathbb{R}$ is a finitely-valued generalized polynomial map and $a\colon g(\mathbb{N}_0) \to \Sigma$ is an arbitrary map.
\end{definition}


\begin{example}
\begin{enumerate}
    \item The simplest non-trivial family of generalized polynomials includes those that directly correspond to the codings of a rotation by intervals. 

Let $\theta\in[0,1)$ and $x\in\mathbb{R}$. We partition the unit interval $[0, 1)$ into $d \geq 2$ sub-intervals using boundary points $0 = \gamma_0 < \gamma_1 < \dots < \gamma_d = 1$. Let $g(i) = \{i\theta+x\}$. The characteristic function indicating whether $g(i)$ falls into the interval $[\gamma_{j-1}, \gamma_j)$ can be written using the floor function as\[\lfloor g(i) + 1 - \gamma_{j-1} \rfloor - \lfloor g(i) + 1 - \gamma_j \rfloor.\] Note that the polynomial $i\theta+x$ is of degree 1 in $i$. 
    \item More generally, let $g(x) = \sum_{s=0}^\ell b_s x^s \in \mathbb{R}[x]$ be a polynomial of degree $\ell \geq 1$. Let $\mathbf{a}$ be defined by $a_n = c(\{g(n)\})$, where $c\colon[0,1)\to\Sigma$ is a non-constant piecewise-constant function with partition boundaries $0=\gamma_0<\gamma_1<\cdots<\gamma_d=1$. Then $\mathbf{a}$ is a bracket word.
\end{enumerate}

\end{example}

 Motivated by \cite{Mauduit-1999-IV,Mauduit-2000-V}, the goals now are to establish sufficient conditions for higher-degree bracket words that force $\mathbf{Rdio} < \infty$  (resp. $\mathbf{Rdio} = \infty$).

\begin{theorem}\label{theorem: bracket finite Rdio}
Let $g(x) = \sum_{s=0}^\ell b_s x^s \in \mathbb{R}[x]$ be a polynomial of degree $\ell \geq 2$. Let $\mathbf{a}$ be defined by $a_n = c(\{g(n)\})$, where $c\colon[0,1)\to\Sigma$ is a non-constant piecewise-constant function with partition boundaries $0=\gamma_0<\gamma_1<\cdots<\gamma_d=1$. Assume the leading coefficient $b_\ell$ is badly approximable. Then $\mathbf{Rdio}(\mathbf{a}) < \infty$.
\end{theorem}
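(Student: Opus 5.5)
Throughout, let $k = s_n - r_n$ denote the shift and $N = t_n - s_n$ the length of the compared windows. The plan is the same as for Theorem~\ref{theorem: Thue Morse}: show that there are constants $C_1, C_2 > 0$, depending only on $g$ and $c$, such that for all $m\geq 0$, $k\geq 1$ and $N\geq 1$ the number of mismatches
\[
D(m,k,N) := |\{ m\le i<m+N : a_i\ne a_{i+k}\}|
\]
satisfies $D(m,k,N)\ge C_1N - C_2 k$. Granting this, the final computation of Theorem~\ref{theorem: Thue Morse} applies verbatim. Condition $(*)_\rho$ with $\epsilon<C_1$ forces $(C_1-\epsilon)(t_n-s_n)\le C_2(s_n-r_n)$. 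Since $t_n-s_n\ge(\rho-1)s_n$, this gives $\rho\le 1+C_2/(C_1-\epsilon)$, so $\mathbf{Rdio}(\mathbf{a})<\infty$. A weaker inequality, say $D(m,k,N)\ge C_1N$ whenever $N\ge C_2 k$, would serve equally well.

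To obtain the mismatch bound, write $h_k(i)=g(i+k)-g(i)$. This is a polynomial in $i$ of degree $\ell-1\ge1$ with leading coefficient $\ell k b_\ell$. The mismatch condition $a_i\ne a_{i+k}$ holds whenever
\[
\{g(i)\}\in E_{\{h_k(i)\}},\qquad E_\alpha := \{y : c(y)\ne c(y+\alpha \bmod 1)\}.
\]
The key tool is equidistribution of the pair $(\{g(i)\},\{h_k(i)\})$ along $i\in[m,m+N)$. A suitable combination of the two coordinates has leading coefficients $b_\ell$ and $\ell k b_\ell$, and I would control the Weyl sums
\[
\sum_{i=m}^{m+N-1} e\bigl(u\,g(i)+v\,h_k(i)\bigr),\qquad (u,v)\ne(0,0) \text{ small integers}.
\]
The phase is a polynomial of degree $\ell$ (or of degree $\ell-1$ when $u=0$) with leading coefficient $(u+\ell k v)b_\ell$ or $\ell k v b_\ell$. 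Because $b_\ell$ is badly approximable, Weyl differencing together with the standard estimate for $\sum_j\min(N,\|j\beta\|^{-1})$ yields a bound of the form $o(N)$, uniform in $m$, as long as $N$ is large relative to $k$ raised to some fixed power. Erdős–Turán–Koksma then makes the discrepancy of the pair small. Since the function $(y,\alpha)\mapsto \mathbf 1_{E_\alpha}(y)$ is a finite union of polygons, a constant fraction, namely $\int_0^1 f(\alpha)\,d\alpha>0$ with $f$ as in the proof of Theorem~\ref{theorem: Dio coding rotation}, of the $i$'s are mismatches.

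The obstacle is uniformity in $k$. Weyl's inequality with a badly approximable leading coefficient only controls sums of length $N\gg k^{A}$ for some exponent $A$ depending on $\ell$, not for $N\gg k$. My first attempt would be to apply the bound with $N$ comparable to $k$ directly. Badly approximable $b_\ell$ gives $\|qb_\ell\|\ge\kappa/q$, and this should handle the complete sums over a length-$k$ range after one differencing step. Here one uses the fact that the degree-$(\ell-1)$ difference polynomial has leading coefficient $\ell kb_\ell$, whose small multiples $jk\ell b_\ell$ stay at distance at least $\kappa/(jk\ell)$ from the integers. If this only yields $D\ge C_1N$ for $N\ge C_2k^{A}$, I would fall back on the structure of $\mathbf{Rdio}$-witnesses. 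Since $t_n-s_n\ge(\rho-1)s_n\ge(\rho-1)(s_n-r_n)$, only the regime $N\gtrsim k$ matters. I would then refine the argument by splitting $[m,m+N)$ into blocks of length about $k$ and applying the degree-$1$ (rotation-type) analysis of Theorem~\ref{theorem: Dio coding rotation}, Case~1/Case~2, to $\{h_k(i)\}$ locally when $\ell=2$, and by induction on $\ell$ in general. I expect this uniformity step to be the heart of the proof.
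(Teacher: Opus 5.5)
The framework you set up matches the paper's: equidistribution of the pair $(\{g(i)\},\{g(i+k)-g(i)\})$ via Erd\H{o}s--Tur\'an--Koksma, Weyl sums split according to whether the frequency $u$ on $\{g(i)\}$ vanishes, the badly approximable hypothesis to handle the $k$-dependence, and a region of positive measure that forces mismatches. The paper uses the single box $[\gamma_1-\Delta/3,\gamma_1)\times[\Delta/3,2\Delta/3]$ for this region. That is the cleanest choice, because a box-discrepancy bound applies to it directly. Your polygonal set $\{(y,\alpha): c(y)\neq c(y+\alpha)\}$ would first need an inner approximation by boxes. Packaging the argument as $D(m,k,N)\ge C_1N-C_2k$ rather than as a contradiction with $\rho$ large is equivalent. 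The $u\neq0$ sums only become small for $N\ge N_0$, but since $k\ge1$ you can absorb this into $C_2$.

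The gap is that you stop exactly at the decisive estimate. You leave uniformity in $k$ open, suggest Weyl's inequality may only handle $N\gg k^{A}$ with $A>1$, and offer a fallback (blocks of length $k$, induction on $\ell$) that is not an actual argument. Your pessimism comes from a miscomputation. For $u\neq0$ the phase $u\,g(i)+v\,h_k(i)$ has leading coefficient $u\,b_\ell$, not $(u+\ell kv)b_\ell$; the term $\ell kvb_\ell$ sits in the coefficient of $i^{\ell-1}$. These frequencies therefore have a $k$-independent top coefficient. Weyl/van der Corput bounds depend only on that top coefficient, so they give $o(N)$ uniformly in $m$ and $k$.

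The only genuine $k$-dependence is for $u=0$, where the phase has degree $\ell-1$ and leading coefficient $\alpha=\ell kvb_\ell$. Here your ``first attempt'' works, and it is what the paper does. By Dirichlet, choose $a/q$ with $q\le\tau$ and $|\alpha-a/q|\le 1/(q\tau)$. Applying $\|Qb_\ell\|\ge\kappa/Q$ with $Q=q|v|\ell k$ gives $\kappa/(q|v|\ell k)\le\|q\alpha\|\le 1/\tau$, hence $q\ge\kappa\tau/(|v|\ell k)$. Taking $\tau\asymp\sqrt{kN^{\ell-1}}$ in Weyl's inequality bounds the normalized sum by $N^{\varepsilon}\bigl(N^{-1}+C\sqrt{k/N^{\ell-1}}\bigr)^{2^{2-\ell}}$. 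So the relevant exponent is $A=1/(\ell-1)\le 1$. For $\ell\ge3$ these sums tend to $0$ uniformly as soon as $k\le N$, since $\sqrt{k/N^{\ell-1}}\le N^{-1/2}$ beats $N^{\varepsilon}$. For $\ell=2$ the sum is geometric: $\bigl|\sum_i e(2vkb_2 i)\bigr|\le 1/(2\|2vkb_2\|)\le |v|k/\kappa$, which is at most $|v|N/(\kappa C)$ when $N\ge Ck$. This borderline linear case is why one needs $N\ge Ck$ with $C$ large, i.e.\ why the paper fixes $H$ first and then takes $\rho$ large. It is exactly the regime guaranteed by $t_n-s_n\ge(\rho-1)(s_n-r_n-1)$. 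Once this estimate is in place the proof closes, with no block decomposition or induction on $\ell$ needed.
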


\begin{remark}
  Before giving the proof, note that when $\ell=1$, the shift $(i+k)\theta - i\theta = k\theta \pmod 1$ is independent of $i$. This observation leads to the proof of Theorem~\ref{Theorem: coding rotations} based on the equidistribution of $(i\theta+x)_{i\geq0}$ to bound the discrepancy. When $\ell\geq2$, the shift becomes dependent on $i$, so this equidistribution is no longer sufficient to bound the discrepancy. We will see that we need to separate the discrepancy into two parts: one part can be bounded using equidistribution, while the other requires Weyl's quantitative bounds for exponential sums of polynomials.
\end{remark}

\begin{proof}
    Assume for contradiction that $\mathbf{Rdio}(\mathbf{a}) = \infty$. Let \[\Delta = \min_{1 \leq j \leq d} (\gamma_j - \gamma_{j-1}) > 0.\] Let $\rho > 1$ (to be chosen later), and $\epsilon$ such that $0 < \epsilon < {\Delta^2}/{9}$. Then there exist  sequences $(r_n), (s_n), (t_n)$ and $\delta\geq0$ satisfying conditions in $(*)_\rho$.

For $1 \leq i \leq t_n-s_n$, define $\vec{v}_i^{(n)} = (x_i, y_i) \in [0,1)^2$ as follows
\begin{align*}
x_i &= g(r_n + i) \pmod 1 \\
y_i &= g(r_n + i + s_n - r_n) - g(r_n + i) \pmod 1
\end{align*}
By binomial expansion, $y_i = \ell b_\ell (s_n - r_n) i^{\ell-1} + \mathcal{O}(i^{\ell-2})$. We denote by $D_n$ the two-dimensional discrepancy with respect to $\vec{v}_i^{(n)}$, that is
\[D_n = \sup_{B \subset [0,1)^2} \left\vert{} \frac{|\{1\leq i\leq t_n-s_n:\vec{v}_i^{(n)}\in B\}|}{t_n - s_n} - \mu(B) \right\vert{}\] where $\mu$ denotes the Lebesgue measure. By the Erd\H{o}s--Tur\'an--Koksma inequality (see e.g. \cite[Theorem~1.21]{Drmota-Tichy1997}), we have for any integer $H \geq 1$ that
\begin{equation}\label{eq: ETK}
    D_n \leq \frac{9}{4} \left( \frac{2}{H+1} + \sum_{0 < \|(h_1, h_2)\|_\infty \leq H} \frac{1}{r(h_1, h_2)} \left| \frac{1}{t_n-s_n} \sum_{i=1}^{t_n-s_n} e^{2\pi \mathrm{i} (h_1 x_i + h_2 y_i)} \right| \right)  
\end{equation}
    where $\|(h_1, h_2)\|_\infty=\max(|h_1|,|h_2|)$ and $r(h_1, h_2) = \max(1, |h_1|) \max(1, |h_2|)$.

To simplify, we denote \[S_n(h_1, h_2) = \frac{1}{t_n-s_n} \sum_{i=1}^{t_n-s_n} e^{2\pi \mathrm{i} (h_1 x_i + h_2 y_i)}.\]
For a fixed vector $(h_1, h_2) \neq (0,0)$, consider the polynomial $P_i = h_1 x_i + h_2 y_i$. We evaluate the exponential sum $S_n(h_1, h_2)$ by separating the frequencies into two distinct cases.

\textbf{Case 1}: $h_1 \neq 0$. The polynomial $P_i$ has degree $\ell$ in $i$, with irrational leading coefficient $h_1 b_\ell$. By Weyl's equidistribution Theorem~(see e.g. \cite[Theorem~3.2]{Drmota-Tichy1997}), we have\begin{equation}\label{eq: Case 1}
    \lim_{n \to \infty} S_n(h_1, h_2) = 0.
\end{equation}

\textbf{Case 2}: $h_1 = 0$ and $h_2 \neq 0$. The polynomial $P_i$ drops to degree $\ell-1$ in $i$, with leading coefficient $\alpha = h_2 \ell b_\ell (s_n-r_n)$.

Let $\tau \geq 1$ be a real parameter to be chosen later. By Dirichlet's approximation theorem, we can find coprime integers $p, q$ with $q \leq \tau$ such that
\[\left|\alpha - \frac{p}{q}\right| \leq \frac{1}{q \tau} \leq \frac{1}{q^2}.\]
It follows that \[\left| b_\ell - \frac{p}{q h_2 \ell (s_n-r_n)} \right| \leq \frac{1}{q |h_2| \ell (s_n-r_n) \tau}.\]
Since $b_\ell$ is badly approximable, there exists an absolute constant $\kappa > 0$ such that for any rational ${P}/{Q}$, we have $|b_\ell - {P}/{Q}| \geq {\kappa}/{Q^2}$. Therefore \[\frac{\kappa}{q^2 |h_2|^2 \ell^2 (s_n-r_n)^2} \leq \frac{1}{q |h_2| \ell (s_n-r_n) \tau}.\]
We deduce that \[q \geq \frac{\kappa \tau}{|h_2| \ell (s_n-r_n)}.\]
   
   Applying Weyl's quantitative bound for exponential sums of polynomials of degree $\ell-1$ (see e.g. \cite[Lemma~2.4]{Vaughan-1997}), there exists a constant $C_1 > 0$ (depending only on $\ell$) such that
   \begin{equation}\label{eq: Weyl quantitative bound}
       |S_n(0, h_2)| \leq C_1 \left( \frac{1}{t_n-s_n} + \frac{1}{q} + \frac{q}{(t_n-s_n)^{\ell-1}} \right)^{2^{2-\ell}}.
   \end{equation}
  We need to bound the right-hand side of \eqref{eq: Weyl quantitative bound}. Since $q \leq \tau$, we have \[\frac{1}{q} + \frac{q}{(t_n-s_n)^{\ell-1}} \leq \frac{|h_2| \ell (s_n-r_n)}{\kappa \tau} + \frac{\tau}{(t_n-s_n)^{\ell-1}}.\]
   To minimize the right-hand side, we choose \[\tau = \sqrt{\frac{|h_2| \ell}{\kappa} (s_n-r_n)(t_n-s_n)^{\ell-1}}.\] We obtain \[\frac{1}{q} + \frac{q}{(t_n-s_n)^{\ell-1}} \leq 2 \sqrt{ \frac{|h_2| \ell}{\kappa} \frac{s_n-r_n}{(t_n-s_n)^{\ell-1}} }.\]
Since $t_n-s_n \geq (\rho - 1)(s_n-r_n-1)$ and $\ell \geq 2$, we have ${(s_n-r_n)}/{(t_n-s_n)^{\ell-1}} \leq {2}/{(\rho - 1)}$. Thus,\[\frac{1}{q} + \frac{q}{(t_n-s_n)^{\ell-1}} \leq 2 \sqrt{ \frac{2|h_2| \ell}{\kappa (\rho - 1)} }.\]
Therefore, from \eqref{eq: Weyl quantitative bound} we have
\[|S_n(0, h_2)| \leq C_1 \left( \frac{1}{\rho-1} + 2 \sqrt{ \frac{2|h_2| \ell}{\kappa (\rho - 1)} } \right)^{2^{2-\ell}}.\]
Thus, there exists a constant $C_2 > 0$ independent of $\rho$ such that for all sufficiently large $n$, we obtain\begin{equation}\label{eq: Case 2}|S_n(0, h_2)| \leq C_2 (\rho-1)^{-2^{1-\ell}}.\end{equation}

Now we combine these two cases to bound $D_n$. It follows that if we fix $H$ sufficiently large such that \[\frac{9}{2(H+1)} < \frac{1}{3} \left( \frac{\Delta^2}{9} - \epsilon \right),\] choose $\rho$ sufficiently large (which can be done uniformly since $|h_1|,|h_2| \leq H$) such that the sum \[\sum_{h_2 \neq 0} \frac{1}{r(0,h_2)}|S_n(0, h_2)|\] associated with \eqref{eq: Case 2} of Case 2 is bounded above by $ {4}/{27} \left( {\Delta^2}/{9} - \epsilon \right)$, and take $n$ large enough so that the sum \[\sum_{h_1 \neq 0} \frac{1}{r(h_1,h_2)}|S_n(h_1, h_2)|\] associated with \eqref{eq: Case 1} of Case 1 is also bounded above by ${4}/{27} \left( {\Delta^2}/{9} - \epsilon \right)$, then for all sufficiently large $n$, we have
\begin{equation}\label{eq: upper bound for Dn}
    D_n < \frac{\Delta^2}{9} - \epsilon.
\end{equation}

Since $c$ is non-constant, there exists some break point $\gamma_j$ where $c$ changes value. Without loss of generality, assume  $c([\gamma_0, \gamma_1)) \neq c([\gamma_1, \gamma_2))$. We then define 
\[ \Omega = \left[ \gamma_1 - \frac{\Delta}{3}, \gamma_1 \right) \times \left[ \frac{\Delta}{3}, \frac{2\Delta}{3} \right] \subset [0, 1)^2 \] whose Lebesgue measure is $\mu(\Omega) = {\Delta^2}/{9}$. Note that if $\vec{v}_i^{(n)} \in \Omega$, then $x_i \in [\gamma_0, \gamma_1)$ and $x_i + y_i \in [\gamma_1, \gamma_2)$. This  forces $a_{i+r_n} \neq a_{i+s_n}$. It follows from the definition of the discrepancy $D_n$ that the number of mismatches between $\mathbf{a}[r_n+1,r_n+t_n-s_n]$ and $\mathbf{a}[s_n+1,t_n]$ is at least
\[(t_n-s_n)(\mu(\Omega) - D_n) =(t_n-s_n)\left( \frac{\Delta^2}{9} - D_n\right). \]
Therefore, the $(\epsilon, \delta)$-closeness implies that
\[ \epsilon \geq \frac{\Delta^2}{9} - D_n, \text{ i.e. }D_n \geq \frac{\Delta^2}{9} - \epsilon ,\]
which contradicts~\eqref{eq: upper bound for Dn}. Thus $\mathbf{Rdio}(\mathbf{a}) < \infty$ as desired. 
\end{proof}

\begin{theorem}\label{theorem: Dio bracket word}
    Let $g(x) = \sum_{s=0}^\ell b_s x^s$ with $\ell\geq2$. Let $\mathbf{a}$ be defined by $a_n = c(\{g(n)\})$, where $c\colon[0,1)\to\Sigma$ is a non-constant piecewise-constant function with partition boundaries $0=\gamma_0<\gamma_1<\cdots<\gamma_d=1$.  Assume that the coefficients $b_1, \dots, b_\ell$ satisfy the following Diophantine condition: there exist a sequence of integers $q_n \to \infty$ and a sequence of real numbers $N_n$ such that:
    \begin{itemize}
        \item $0<\max_{1 \leq s \leq \ell} \|q_n b_s\| \leq q_n^{-N_n}$,
        \item $\lim_{n \to \infty} {N_n \ln(q_n)}/{q_n} = \infty$.
    \end{itemize}
    Then $\mathbf{Dio}(\mathbf{a}) = \infty$.
\end{theorem}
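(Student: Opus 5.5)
Our plan is to use the period $q_n$ itself, in the spirit of the well-approximable case of Theorem~\ref{theorem: Dio coding rotation}. Write $b_s = (p_{s,n} + \eta_{s,n})/q_n$ with $p_{s,n}\in\mathbb Z$ and $|\eta_{s,n}| = \|q_n b_s\| \leq q_n^{-N_n}$. For $i\geq 0$, we compare $g(i+q_n)$ and $g(i)$ modulo $1$. Expanding binomially,
\[
g(i+q_n)-g(i) = \sum_{s=1}^{\ell} b_s \sum_{j=1}^{s}\binom{s}{j} i^{s-j} q_n^{j}.
\]
Every term with $j\geq1$ contains the factor $q_n b_s = p_{s,n}+\eta_{s,n}$, and the remaining factor $\binom{s}{j} i^{s-j}q_n^{j-1}$ is an integer. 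Hence
\[
\|g(i+q_n)-g(i)\| \leq \sum_{s=1}^{\ell}|\eta_{s,n}|\,(i+q_n)^{s} \leq \ell\, q_n^{-N_n}(i+q_n)^{\ell}.
\]
Thus, as long as $i\le L_n$ with $L_n^\ell \ll q_n^{N_n}$, the points $\{g(i)\}$ and $\{g(i+q_n)\}$ are extremely close.

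A mismatch $a_i\neq a_{i+q_n}$ forces $\{g(i)\}$ to lie within distance $\ell q_n^{-N_n}(i+q_n)^\ell$ of a partition point $\gamma_j$, or to wrap across $0\equiv1$. We must then show that this happens for no $i$ in a long window $[0,L_n]$ with $L_n/q_n\to\infty$; that would give $U=\emptyset$, $V=\mathbf a[0,q_n-1]$ and ratio $L_n/q_n\to\infty$. The natural first attempt is periodicity. Put $g_n(i)=\sum_s p_{s,n} i^s/q_n + b_0$, whose fractional part is $q_n$-periodic in $i$ (indeed $g_n(i+q_n)-g_n(i)\in\mathbb Z$). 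Then $g(i)-g_n(i)=\sum_s \eta_{s,n} i^s/q_n$ is tiny for $i\le L_n$. So $\{g(i)\}$ is within $\epsilon_n:=\ell L_n^\ell q_n^{-N_n}$ of one of at most $q_n$ points $\{g_n(i)\}$, $0\le i<q_n$. Mismatches can only occur if one of these finitely many values lies within about $2\epsilon_n$ of some $\gamma_j$.

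This is the main obstacle: the values $\{g_n(i)\}$ lie in $\frac1{q_n}\mathbb Z+b_0$ and could sit exactly on or near a boundary $\gamma_j$. We therefore do not aim for zero mismatches. Instead we count the bad $i$ as in Theorem~\ref{Theorem: coding rotations}. Exact hits $\{g_n(i)\}=\gamma_j$ are harmless, because the drift $\sum_s\eta_{s,n}i^s/q_n$ is a polynomial in $i$. Its sign is therefore eventually constant, so over $i\in[0,L_n]$ the point crosses each $\gamma_j$ from a fixed residue only $O(\ell)$ times. Each residue class $i\equiv i_0 \pmod{q_n}$ thus contributes $O(\ell d)$ sign changes of $c(\{g(i)\})$ along the class. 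Equivalently, along each class $i_0+mq_n$ the sequence $a$ changes value at most $O(\ell d)$ times as $m$ runs over $[0,L_n/q_n]$. Summing over the $q_n$ residue classes, the number of mismatches in $[0,L_n]$ is at most $C\ell d\, q_n$.

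With that bound, we mimic the segmentation argument of the well-approximable case of Theorem~\ref{theorem: Dio coding rotation}. We list the mismatch indices $i_1<\dots<i_r$ in $[0,L_n]$ with $r\le Cq_n$ and form the prefixes $U_jV_j^{w}$ with $|V_j|=q_n$. If all ratios are at most $M_n$, induction gives $i_{j}\le(2M_n)^{j}q_n$, whence
\[
2M_n\ge (L_n/q_n)^{1/(Cq_n+1)}.
\]
To make $M_n\to\infty$ we need $\log(L_n/q_n)/q_n\to\infty$. We choose $L_n = q_n^{N_n/(2\ell)}$. Then $\epsilon_n\to0$, and
\[
\frac{\log L_n}{q_n} \asymp \frac{N_n\log q_n}{q_n}\to\infty,
\]
which is exactly the second hypothesis. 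The hypothesis $\max_s\|q_nb_s\|>0$ keeps the drift polynomial nonzero, so its sign is eventually constant; in the degenerate case where all the $\eta_{s,n}$ vanish, $\mathbf a$ would simply be periodic along the progression. Letting $n\to\infty$ gives $\mathbf{Dio}(\mathbf a)=\infty$. The step needing the most care is the uniform $O(\ell d)$ count of crossings per residue class, including boundary wrap-around at $0$.
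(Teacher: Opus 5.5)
Your proof is correct and is essentially the paper's argument. Both use the same splitting $g=g_{rat}+g_{rem}$ with $\{g_{rat}\}$ of period $q_n$, bound the mismatches by $O(\ell d\,q_n)$ on a window of length $q_n^{\asymp N_n/\ell}$ by counting real roots of the small remainder polynomial, and finish with the same telescoping bound $2M_n\ge (L_n/q_n)^{1/(r+1)}$. The only cosmetic difference is that you count level crossings along each of the $q_n$ residue classes, whereas the paper fixes $\gamma_j$, uses its unique nearby point $V_{j,n}\in b_0+\frac{1}{q_n}\mathbb{Z}$, and counts roots of $g_{rem}(x)-(\gamma_j-V_{j,n})$, each root accounting for at most $q_n+1$ indices.

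One small correction: justify the $O(\ell)$ crossings per class by the fact that $h(x)-(\gamma_j-g_n(i_0))$ has at most $\ell$ real roots. The sign of the drift being eventually constant does not give this bound, and the root count also handles near-hits, not only exact hits.
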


\begin{proof}
We will follow the strategy used in the proof of Theorem~\ref{theorem: Dio coding rotation} (when $\theta$ is well approximable). 

Let $p_{s,n} \in \mathbb{Z}$ such that $b_s = {p_{s,n}}/{q_n} + \delta_{s,n}$ with $|\delta_{s,n}| \leq q_n^{-N_n-1}$.
We decompose $g(x) = g_{rat}(x) + g_{rem}(x)$ into a rational part and a remainder part:
\[ g_{rat}(x) = b_0 + \sum_{s=1}^\ell \frac{p_{s,n}}{q_n} x^s, \quad g_{rem}(x) = \sum_{s=1}^\ell \delta_{s,n} x^s. \]
Observe that $g_{rat}(x + q_n) = g_{rat}(x) \pmod 1$.

We set $L_n = q_n^K-1$, where $K = \lfloor {(N_n - 2)}/{\ell} \rfloor$. For any integer $i \in [0, L_n + q_n]$ and for $n$ sufficiently large, we have $i \leq 2q_n^K$. Thus,
\[ |g_{rem}(i)| \leq \sum_{s=1}^\ell |\delta_{s,n}| i^s \leq \sum_{s=1}^\ell q_n^{-N_n-1} (2q_n^K)^\ell. \]
Since $K\ell \leq N_n - 2$, we deduce that
\[ |g_{rem}(i)| \leq \ell 2^\ell q_n^{-N_n-1} q_n^{N_n-2} = \ell 2^\ell q_n^{-3}. \]

Once again, we set  
\[ \Delta = \min_{1 \leq j \leq d} (\gamma_j - \gamma_{j-1}) >0. \] 
Note that for any integer $i$, $g_{rat}(i) \pmod 1$ belongs to $b_0 + \mathbb{Z}/q_n \pmod 1$. The distance between any two distinct points in $b_0 + \mathbb{Z}/q_n \pmod 1$ is at least ${1}/{q_n}$. For an index $i \in [0, L_n]$ to be a mismatch (meaning $a_i \neq a_{i+q_n}$), at least one of $g(i)$ or $g(i+q_n)$ must fall close to a boundary $\gamma_j$. Since both $|g_{rem}(i)|$ and $|g_{rem}(i+q_n)|$ are bounded by $\ell 2^\ell q_n^{-3}$, it follows that $g_{rat}(i) \pmod 1$ must fall within an $\ell 2^\ell q_n^{-3}$-neighborhood of some $\gamma_j$. Because $2(\ell 2^\ell q_n^{-3}) < {1}/{q_n}$ for all sufficiently large $n$, each $\gamma_j$ can have at most one associated point $V_{j,n} \in b_0 + \mathbb{Z}/q_n \pmod 1$ in its $\ell 2^\ell q_n^{-3}$-neighborhood. Thus, if $i$ is a mismatch, then $g_{rat}(i) = V_{j,n} \pmod 1$ for some $1 \leq j \leq d$.

Now we count the number of mismatches $i \in [0, L_n]$. If an index $i$ is a mismatch, the values $g(i)$ and $g(i+q_n)$ must fall on opposite sides of some $\gamma_j$. Because $g_{rat}(i) = g_{rat}(i+q_n) = V_{j,n} \pmod 1$ for some $j$, it follows that $g_{rem}(i)$ and $g_{rem}(i+q_n)$ must fall on opposite sites of $\gamma_j - V_{j,n}$. Thus, by the intermediate value theorem, the polynomial $g_{rem}(x) - (\gamma_j - V_{j,n})$ must have a real root in $[i, i+q_n]$. Note that $\deg(g_{rem}(x) - (\gamma_j - V_{j,n})) \leq \ell$, so this polynomial has at most $\ell$ real roots. Since a real root can belong to the interval $[i, i+q_n]$ for at most $q_n + 1$ integers $i$, the total number of mismatches within $[0, L_n]$ is bounded above by $\ell d (q_n+1)$.

Next, we use the telescoping argument from the proof of Theorem~\ref{theorem: Dio coding rotation}.
Assume that the mismatches occur at indices $i_1 < \dots < i_r < L_n+1 = q_n^K$ for some $r\leq \ell d(q_n+1)$. We set $i_0=0$ and $i_{r+1}=q_n^K$, and define
\[ M_n = \max_{0 \leq j \leq r} \frac{i_{j+1}}{i_j + 1 + q_n}. \]
An induction yields
\[  2M_n \geq \left( q_n^{K-1} \right)^{\frac{1}{r+1}} \geq \left( q_n^{K-1} \right)^{\frac{1}{\ell d (q_n+1) + 1}}. \]
Taking the logarithm, it follows that
\[ \ln(2M_n) \geq \frac{K-1}{\ell d (q_n+1) + 1} \ln(q_n). \]
Since \[K - 1 = \lfloor \frac{N_n - 2}{\ell} \rfloor - 1 \geq \frac{N_n - 2\ell - 2}{\ell},\] we obtain 
\[ \ln(2M_n) \geq \frac{N_n - 2\ell - 2}{\ell(\ell d q_n + \ell d + 1)} \ln(q_n) = \left( \frac{N_n \ln(q_n)}{q_n} \right) \left( \frac{1 - \frac{2\ell + 2}{N_n}}{\ell^2 d + \frac{\ell d + 1}{q_n}} \right). \]
As $n \to \infty$, our assumption $\lim_{n \to \infty} {N_n \ln(q_n)}/{q_n} = \infty$ implies that $M_n \to \infty$. Therefore, $\mathbf{Dio}(\mathbf{a}) = \infty$.
\end{proof}

\begin{remark}
    Using the recursive relation of the continued fraction convergents, one can easily construct examples that satisfy the Diophantine assumption in Theorem~\ref{theorem: Dio bracket word}. 
\end{remark}

Using the same strategy, but restricting to the monomial $g(x) = \beta x^\ell$, we obtain the following result with a weaker Diophantine assumption on $\beta$.

\begin{theorem}\label{theorem: monomial bracket}
Let $\beta$ be an irrational number with continued fraction convergents $({p_n}/{q_n})$. Let $g(x) = \beta x^\ell$ and let $c\colon[0,1)\to\Sigma$ be a non-constant piecewise-constant function with \emph{rational} partition boundaries. Let $\mathbf{a}$ be defined by $a_n = c(\{g(n)\})$. If 
\[ \limsup_{n \to \infty} \frac{q_{n+1}}{q_n^\ell} = \infty, \] 
then $\mathbf{Dio}(\mathbf{a}) = \infty$.
\end{theorem}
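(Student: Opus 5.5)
Following the proof of Theorem~\ref{theorem: Dio bracket word}, I will take $q_n$ itself as the period and count mismatches $a_i\neq a_{i+q_n}$ on a long prefix $[0,L_n]$. The rational boundaries make this much cleaner. Write $\beta = p_n/q_n + \eta_n$ with $|\eta_n| < 1/(q_nq_{n+1})$. Let $Q$ be a common denominator of the boundaries $\gamma_j$, and restrict to indices $i$ that are multiples of $Q$ via a shift of period $Qq_n$. Alternatively, and more simply, use the period $Qq_n$ throughout; this only changes constants, since we need $q_{n+1}/(Qq_n)^\ell\to\infty$ along a subsequence, which still holds. Then
\[
g(i)=\frac{p_n i^\ell}{q_n}+\eta_n i^\ell .
\]
The rational part $p_n i^\ell/q_n \bmod 1$ lies in $\frac1{q_n}\mathbb Z/\mathbb Z$ and is $q_n$-periodic in $i$. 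The perturbation satisfies $\eta_n i^\ell \le i^\ell/(q_nq_{n+1})$, which stays below $1/(2Qq_n)$ as long as $i \le L_n := c\,(q_{n+1}/Q)^{1/\ell}$ for a small constant $c$.

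The first step is to control where mismatches can occur. Let $y_i = p_n i^\ell/q_n \bmod 1$, and compare with the boundaries using the common lattice $\frac{1}{Qq_n}\mathbb Z$. Both $y_i$ and the $\gamma_j$ lie on this lattice, so $y_i$ is either exactly on some $\gamma_j$ or at distance at least $1/(Qq_n)$ from every boundary. In the latter case, the perturbation is smaller than that distance, so $a_i$ is determined by $y_i$ alone. Since $y_{i+q_n}=y_i$, such an $i$ cannot be a mismatch. If $y_i=\gamma_j$ exactly, then $g(i)$ sits just above $\gamma_j$ or just below it according to the sign of $\eta_n$. This side is the same for $i$ and $i+q_n$ (positive $i$, sign of $\eta_n i^\ell$ fixed), so again no mismatch occurs. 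The only exception is $\eta_n i^\ell$ crossing through a wrap, which cannot happen while it stays below $1/(2Qq_n)$. Hence there are \emph{no} mismatches for $0\le i\le L_n - q_n$. This is stronger than in Theorem~\ref{theorem: Dio bracket word}: the rational boundaries remove the need for the telescoping induction over up to $\ell d(q_n+1)$ mismatches.

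The second step converts this into the exponent. Take $r=-1$, $s=q_n-1$ and $t=L_n$. Then $\mathbf a[0,L_n]$ is a prefix of the form $V^w$ with $|V|=q_n$, and
\[
\frac{t}{s}\asymp \frac{(q_{n+1})^{1/\ell}}{q_n}.
\]
This ratio tends to infinity along a subsequence exactly when $q_{n+1}/q_n^\ell\to\infty$, which is the hypothesis. Therefore $\mathbf{Dio}(\mathbf a)=\infty$.

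The main obstacle is the boundary case $y_i=\gamma_j$ and, more generally, making the lattice argument honest. If the boundaries have denominator $Q$ not dividing $q_n$, the distance from $y_i$ to $\gamma_j$ is only bounded below by $1/(Qq_n)$. This is fine for fixed $Q$ but must be tracked, together with the sign/wrap-around issue at $\gamma_0=0$. I would handle this by working modulo $1$ with half-open intervals and checking that, for a fixed sign of $\eta_n$, the map $i\mapsto$ side of $\gamma_j$ is constant on the relevant range. If this fails, the fallback is to allow $O(d)$ mismatches per residue class, as in Claim~\ref{claim: mismatches well approximable}, and reuse the telescoping bound from Theorem~\ref{theorem: Dio coding rotation}. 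That bound still gives $M_n\gtrsim (L_n/q_n)^{1/O(1)}\to\infty$.
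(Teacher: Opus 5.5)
Your proof is correct and is essentially the paper's argument. The shared ingredients are the splitting $\beta=p_n/q_n+\eta_n$, the $q_n$-periodicity of $p_ni^\ell/q_n \bmod 1$, the separation from the rational boundaries on the lattice $\frac{1}{Qq_n}\mathbb{Z}$, the bound $|\eta_n i^\ell|<1/(2Qq_n)$ for $i\le (q_{n+1}/2Q)^{1/\ell}$, and the same-sign argument showing there are no mismatches, so you need neither the detour through period $Qq_n$ nor the telescoping fallback. One small slip, which the paper shares: at $i=0$ the perturbation vanishes, so when $\eta_n<0$ the point $\{g(q_n)\}=1+\eta_nq_n^\ell$ wraps into $[\gamma_{d-1},1)$ and $a_0\neq a_{q_n}$ can occur; taking $r=0$, $s=q_n$ instead of $r=-1$, $s=q_n-1$ compares only $i\geq 1$ and fixes this.
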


\begin{proof}
We sketch the proof. For each $1\leq j\leq d$, we may write $\gamma_j = B_j/Q$ for some integers $B_j$ and $Q$. We write $\beta = {p_n}/{q_n} + \delta_n$, where \[|\delta_n| \leq \frac{1}{q_n q_{n+1}}.\]Then we decompose $g(x) = g_{rat}(x) + g_{rem}(x)$ where
\[ g_{rat}(x) = \frac{p_n}{q_n} x^\ell, \quad g_{rem}(x) = \delta_n x^\ell. \]
We set $L_n = \left\lfloor \left({q_{n+1}}/{2Q}\right)^{1/\ell} \right\rfloor - q_n$. For $i \in [0, L_n+q_n]$, we have
\[ |g_{rem}(i)| = |\delta_n| i^\ell  \leq \frac{1}{q_n q_{n+1}} \frac{q_{n+1}}{2Q} = \frac{1}{2 Q q_n}. \]

We show that all $i \in [0, L_n]$ satisfy $a_i = a_{i+q_n}$. Indeed, note that $g_{rat}(i)=g_{rat}(i+q_n) \pmod 1\in \mathbb{Z}/{q_n}$, so its distance to any $\gamma_j = B_j/Q$ is either $0$ or $\geq{1}/{Q q_n}$. Because $g_{rem}(i)$ and $g_{rem}(i+q_n)$ have the same sign and both have absolute value $<1/Qq_n$, $g(i)$ and $g(i+q_n)$ must fall into the same partition interval. It follows that  $a_i = a_{i+q_n}$.

Now, our assumption yields that the ratio 
\[ \frac{L_n}{q_n} > \frac{\left(\frac{q_{n+1}}{2Q}\right)^{1/\ell} - q_n - 1}{q_n} = \frac{1}{(2Q)^{1/\ell}} \left( \frac{q_{n+1}}{q_n^\ell} \right)^{1/\ell} - 1 - \frac{1}{q_n} \]
tends to infinity. Therefore $\mathbf{Dio}(\mathbf{a}) = \infty$ as desired.
\end{proof}

\begin{remark}
The Diophantine condition $\limsup_{n \to \infty} {q_{n+1}}/{q_n^\ell} = \infty$ is stronger than the well-approximable condition. Because the shift $g(i+q_n)-g(i)$ depends on $i$, it seems difficult to obtain a dichotomy as in Theorem~\ref{theorem: Dio coding rotation}. We expect that Theorem \ref{theorem: monomial bracket} holds when $\beta$ is well approximable.
\end{remark}

\section{Final remarks}\label{section: conjectures}

Recall that a finite word over $\Sigma$ is said to be \emph{overlap} if it is of the form $axaxa$ where $a\in\Sigma$ and $x$ is a finite word over $ \Sigma$. If an infinite word does not contain any overlap finite subword, it is said to be \emph{overlap-free}. A typical example of an overlap-free word is the Thue--Morse word. It is clear from the definition that the Diophantine exponent of an overlap-free infinite word is always finite. We expect that this is also true for the refined Diophantine exponent. However, at this level of generality, this is a difficult question. Still, we have the following observation saying that for overlap-free words, the number of intervals of mismatches is proportionally large with respect to the $\mathbf{Rdio}$.

\begin{proposition}
    Let $\mathbf{a}$ be an infinite word over $\Sigma$. Assume that $\mathbf{a}$ is \emph{overlap-free} and $\mathbf{Rdio}(\mathbf{a})>1$. Then for every $1<\rho<\mathbf{Rdio}(\mathbf{a})$ and for every $\epsilon>0$, the number of intervals of mismatches $\delta$ is $\geq{(1-\epsilon)(\rho-1)}/{2}-1$.
\end{proposition}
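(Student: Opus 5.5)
Fix $1<\rho<\mathbf{Rdio}(\mathbf{a})$ and $\epsilon>0$ (we may take $\epsilon<1$, otherwise the bound is trivial). Condition $(*)_\rho$ gives sequences $(r_n),(s_n),(t_n)$ and an integer $\delta$ such that, for all large $n$, the words $U=\mathbf{a}[r_n+1,r_n+t_n-s_n]$ and $V=\mathbf{a}[s_n+1,t_n]$ are $(\epsilon,\delta)$-close. Write $k=s_n-r_n$ for the shift and $L=t_n-s_n$ for the common length, and let $I_1,\dots,I_\delta$ be the covering intervals. Condition (iii) gives $L\geq(\rho-1)s_n\geq(\rho-1)(k-1)$. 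The plan is to show that overlap-freeness forces every mismatch-free stretch to be short compared with $k$, and then count how many stretches are needed.

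\textbf{Step 1 (local consequence of overlap-freeness).} Suppose $J\subseteq\{1,\dots,L\}$ is an interval containing no mismatch. Then $a_i=a_{i+k}$ for all $i$ in the shifted range $r_n+J$. Hence the factor of $\mathbf{a}$ starting at $r_n+\min J$ of length $|J|+k$ has period $k$. A factor of length at least $2k+1$ with period $k$ has the form $axaxa$ (take $a$ its first letter and $|ax|=k$), which is an overlap. So overlap-freeness gives $|J|+k\leq 2k$, that is, $|J|\leq k$.

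\textbf{Step 2 (counting).} Remove the union $\bigcup_j I_j$ from $\{1,\dots,L\}$. What remains is mismatch-free, has total size at least $(1-\epsilon)L$, and splits into at most $\delta+1$ maximal intervals, since $\delta$ intervals cut a line into at most $\delta+1$ pieces. By Step 1 each piece has length at most $k$. Therefore
\[(1-\epsilon)L\leq(\delta+1)k.\]

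\textbf{Step 3 (conclusion).} Insert $L\geq(\rho-1)(k-1)$ to get
\[(1-\epsilon)(\rho-1)(k-1)\leq(\delta+1)k.\]
Since $\delta$ is independent of $n$, we can let $n\to\infty$; condition (iv) gives $k=s_n-r_n\to\infty$, so $(k-1)/k\to1$. This yields $\delta+1\geq(1-\epsilon)(\rho-1)$, which is stronger than the stated $\delta\geq(1-\epsilon)(\rho-1)/2-1$.

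The only delicate point is the boundary bookkeeping in Step 1: one must check that a mismatch-free block of length $|J|$ really produces a period-$k$ factor of length $|J|+k$, and pin down the exact overlap threshold ($2k+1$ versus $2k$). If either is off by one, the bound weakens to $|J|\leq 2k$, and that weaker bound still gives exactly the stated inequality with the factor $1/2$. So the claim holds even under the more cautious estimate, and the rest of the argument is routine.
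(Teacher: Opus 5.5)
Your argument is correct and follows the same route as the paper: overlap-freeness bounds the length of each mismatch-free block, the complement of the $\delta$ covering intervals splits into at most $\delta+1$ such blocks, and you then let $s_n-r_n\to\infty$. The difference is in the block estimate. You note that a match-block $J$ yields a period-$(s_n-r_n)$ factor of length $|J|+(s_n-r_n)$, which gives $|J|\le s_n-r_n$ and hence $\delta\ge(1-\epsilon)(\rho-1)-1$. The paper instead uses the cruder estimate $|I_k|\le 2(s_n-r_n)$, which is where its factor $1/2$ comes from. Your sharper version is consistent with $\mathbf{Dio}\le 2$ for overlap-free words in the case $\delta=0$.
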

\begin{proof}
    We note that any periodic finite word whose period is greater than $1$ and whose length is greater than twice its period automatically contains an overlap. It follows that any periodic finite subword of $\mathbf{a}$, whose period is greater than $1$, has length at most twice its period.

Now let $1<\rho<\mathbf{Rdio}(\mathbf{a})$. For any $\epsilon > 0$, there exist sequences of integers $(r_n), (s_n), (t_n)$ and an integer $\delta \geq 0$ such that for all sufficiently large $n$, we have
    \begin{itemize}
        \item $s_n-r_n\geq2$ and tends to infinity,
        \item $t_n  \geq \rho s_n$,
        \item $\mathbf{a}[r_n+1, r_n+t_n-s_n]$ and $\mathbf{a}[s_n+1, t_n]$ are $(\epsilon, \delta)$-close.
    \end{itemize}
 Consequently, the set $\{1\leq i\leq t_n-s_n: a_{i+r_n}=a_{i+s_n} \}$ is contained in at most $\delta + 1$ intervals $I_k$, where
 \begin{equation}\label{eq: lower bound for exact matches}
 \sum_{k=1}^{\delta+1} |I_k| \geq (1 - \epsilon)(t_n-s_n)\geq (1 - \epsilon)(\rho-1)s_n.    
 \end{equation}
 Since each $|I_k|$ is the length of a periodic finite subword of $\mathbf{a}$ of period $s_n-r_n\geq2$, we deduce that $|I_k|\leq 2(s_n-r_n)$. Therefore\begin{equation}\label{eq: upper bound for exact matches}
 \sum_{k=1}^{\delta+1}|I_k| \leq(\delta+1)2(s_n-r_n)\leq 2(\delta+1)(s_n+1).    
 \end{equation}
 Combining~\eqref{eq: lower bound for exact matches} and~\eqref{eq: upper bound for exact matches}, we have \[2(\delta+1)(s_n+1)\geq (1 - \epsilon)(\rho-1)s_n.   \]    
 Let $s_n$ go to $\infty$, we obtain the desired bound.
\end{proof}

Motivated by Table~\ref{table:examples}, we conclude with several questions that, from our point of view, are interesting to study.

\begin{question}
    Over a binary alphabet, is $[1,\infty]$ the spectrum of $\mathbf{Rdio}$? 
\end{question}

\begin{question}
    Over a binary alphabet, is the refined Diophantine exponent of an overlap-free infinite word always finite? 
\end{question}

\begin{question}
    Let $\beta$ be an irrational number and $\ell\geq2$ an integer. Let $g(x) = \beta x^\ell$ and let $c\colon[0,1)\to\Sigma$ be a non-constant piecewise-constant function with partition boundaries. Let $\mathbf{a}$ be defined by $a_n = c(\{g(n)\})$. If $\beta$ is well approximable, does $\mathbf{Dio}(\mathbf{a}) = \infty$ always hold? 
\end{question}

\subsection*{Acknowledgement}
I am deeply grateful to Boris Adamczewski for his helpful discussions and suggestions, and to Charles Favre for his constant encouragement. Part of this work was completed during my visit to the University of Waterloo; I thank the university for its hospitality. I would also like to thank Jason Bell for insightful exchanges, Julien Melleray and Emmanuel Peyre for their questions, and Antoine Aurillard for several discussions on probability.

This project has received funding from the European Union’s MSCA-Horizon Europe, grant agreement No. 101126554. 

 \subsection*{Disclaimer}
Co-funded by the European Union. Views and opinions expressed are however those of the
author only and do not necessarily reflect those of the European Union. Neither the European Union nor the granting authority can be held responsible for them.

\noindent \includegraphics[height = 1cm]{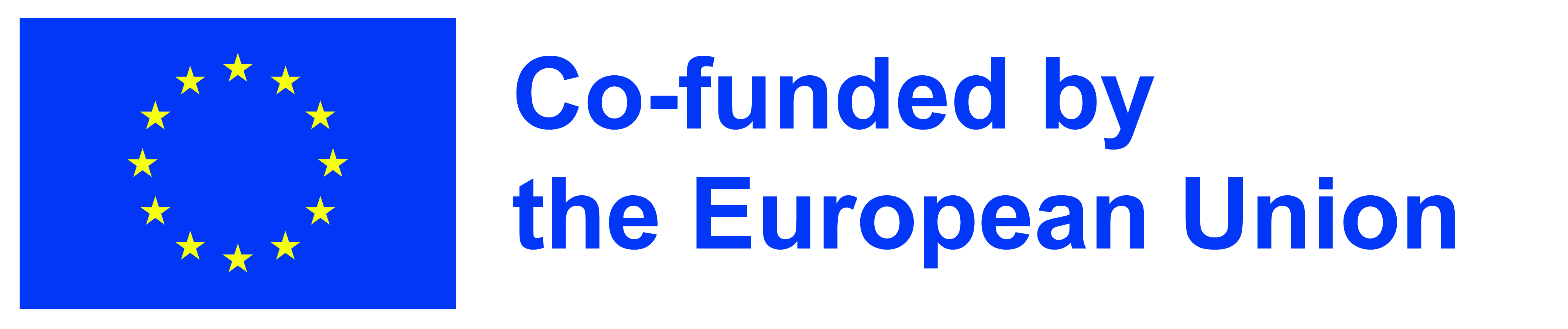}


\bibliographystyle{plain} 
\bibliography{references} 
\end{document}